\newtheorem{thm}{Theorem}[section]
\newtheorem{cor}[thm]{Corollary}
\newtheorem{lem}[thm]{Lemma}
\newtheorem{prop}[thm]{Proposition}
\newtheorem{defi}[thm]{Definition}
\newtheorem{ex}[thm]{Example}
\theoremstyle{remark}
\newtheorem{rem}[thm]{Remark}
\numberwithin{equation}{section}
\renewcommand{\O}{\varnothing}
\newcommand{\supp}{\operatorname{supp}}
\newcommand{\spn}{\operatorname{span}}
\newcommand{\range}{\operatorname{range}}
\newcommand{\Fin}{\operatorname{Fin}}
\newcommand{\N}{{\mathbb N}}
\newcommand{\R}{{\mathbb R}}
\newcommand{\cP}{{\mathcal P}}
\newcommand{\el}[1]{\ensuremath{\ell_{#1}}}
\newcommand{\Lp}[1]{\ensuremath{L_p(#1)}}
\newcommand{\Lpos}[1]{\ensuremath{\mathcal{L}_+(#1)}}
\newcommand{\Lr}[1]{\ensuremath{\mathcal{L}_r(#1)}}
\title[Band projections in spaces of regular operators]{Band projections in spaces of regular operators}
\author[D. Mu\~noz-Lahoz]{David Mu\~noz-Lahoz}
\address{Instituto de Ciencias Matem\'aticas (CSIC-UAM-UC3M-UCM)\\
Consejo Superior de Investigaciones Cient\'ificas\\
C/ Nicol\'as Cabrera, 13--15, Campus de Cantoblanco UAM\\
28049 Madrid, Spain.}
\email{davidmunozlahoz@gmail.com}
\author[P. Tradacete]{Pedro Tradacete}
\address{Instituto de Ciencias Matem\'aticas (CSIC-UAM-UC3M-UCM)\\
Consejo Superior de Investigaciones Cient\'ificas\\
C/ Nicol\'as Cabrera, 13--15, Campus de Cantoblanco UAM\\
28049 Madrid, Spain.}
\email{pedro.tradacete@icmat.es}
\subjclass[2020]{46B42, 46A32, 46A45, 47B65, 47B48, 47L10}
\keywords{Banach lattice; space of regular operators; band projection; multiplication operator}
\begin{document}

\begin{abstract}
We introduce inner band projections in the space of regular operators on a Dedekind complete Banach lattice and study some structural properties of this class. In particular, we provide a new characterization of atomic order continuous Banach lattices as those for which all band projections in the corresponding space of regular operators are inner. We also characterize the multiplication operators $L_AR_B$ which are band projections precisely as those with $A,B$ being band projections up to a scalar multiple.
\end{abstract}

\maketitle

\section{Introduction}

If $X$ is a Banach lattice, the space of bounded linear operators $\mathcal L(X)$ can be equipped with a linear order defined by setting $S\leq T$ whenever $Sx\leq Tx$ for every $x\geq0$ in $X$. We thus refer to positive operators as those satisfying $T\geq0$. In general, the cone of positive operators does not generate the whole space $\mathcal L(X)$, so one is lead to consider the space of regular operators $\mathcal L_r(X)$ (the linear span of the positive operators.) Although this is not always a Banach lattice (see \cite{Wickstead19}), it is well known that under the assumption of $X$ being Dedekind complete, $\mathcal L_r(X)$ with the regular norm is a Banach lattice, and a Banach algebra.

Our aim in this note is to delve deeper in the relation between the algebraic and lattice structures of $\mathcal L_r(X)$. In particular, we will study bands and band projections in $\mathcal L_r(X)$ from this double perspective.

Probably, the most important band in $\mathcal L_r(X)$ is the \textit{centre}, which is the band generated by the identity operator $I_X$. The centre has been extensively studied in the literature \cite{Voigt, Wickstead76, Wickstead88, Wickstead02} (see also \cite{hui-depagter, hui-wickstead} for the extendability of some of these results to bands generated by lattice homomorphisms). Other simple band projections on $\mathcal L_r(X)$ can be obtained as follows: If $P$ and $Q$ denote band projections on the Banach lattice $X$, then the operator $T\mapsto PTQ$ also defines a band projection on $\mathcal L_r(X)$. These band projections are particularly relevant in the study of components of positive operators: if $T$ is positive, its components of the form $PTQ$, where $P$ and $Q$ are band projections on $X$, are called elementary, and are the building blocks of all components (under certain conditions, see \cite[Section 2.1]{AB2006}).

In the finite dimensional situation, where every linear operator can be identified with multiplication by a matrix, and is automatically bounded and regular, the class of band projections can be easily described: let $(e_i)_{i=1}^n$ denote a basis of pairwise disjoint positive vectors of a Banach lattice $X$, so that $\mathcal L_r(X)$ is identified with $n\times n$ matrices with respect to this basis, and for any set $A\subseteq\{1,\ldots,n\}\times\{1,\ldots,n\}$ we can define a band projection
\[
\begin{array}{cccc}
    \mathcal{P} _A\colon& \Lr X & \longrightarrow & \Lr X\\
    & (\alpha_{ij})_{i,j=1}^n&\longmapsto &(\alpha_{ij}\chi_A(i,j))_{i,j=1}^n\\
\end{array}
\]
where $\chi_A(i,j)=1$ if $(i,j)\in A$ and $\chi_A(i,j)=0$ elsewhere. Equivalently, if for $1\leq i\leq n$, $P_i$ denotes the band projection on the band generated by $e_i$ in $X$, we can write $\mathcal P_A(T)=\sum_{(i,j)\in A}P_iTP_j$. It should be clear that all band projections on $\mathcal L_r(X)$ when $X$ is finite dimensional are of the above form.

Inner band projections on $\mathcal L_r(X)$, where $X$ is a Dedekind complete Banach lattice, will be induced in a similar way as in the finite dimensional case by a band decomposition of the space: Given $\{P_\lambda\}_{\lambda\in\Lambda}$, a family of pairwise disjoint band projections on $X$, for every subset $\Gamma\subseteq \Lambda\times \Lambda$, the assignment $T\mapsto\bigvee_{(\alpha,\beta)\in\Gamma} P_\alpha TP_\beta$ for $T\geq0$ can be extended (in a unique way) to a band projection on $\mathcal L_r(X)$. Our aim here is to study this family of band projections and see how they can encode information about the underlying Banach lattice $X$.

In addition, given that the operator $T\mapsto PTQ$ defines a band projection on $\mathcal L_r(X)$ when $P$ and $Q$ are band projections on $X$, a natural question arises: can band projections on a Banach lattice $X$ be characterized by the multiplication operators they induce in $\mathcal L_r(X)$? Recall that given operators $A,B$ on a Banach space $E$, the multiplication operator $L_AR_B\colon\mathcal L(E)\rightarrow \mathcal L(E)$ is defined as $L_AR_B(T)=ATB$. In this respect, a well-known result due to Vala \cite{Vala} asserts that $L_AR_B$ is a compact operator precisely when $A$ and $B$ are compact.  Properties of multiplication operators in terms of their symbols have a long history. In particular, one can find results on spectral theory, Fredholm theory, compactness properties, computation of norms, and several other. We refer the interested reader to the survey articles contained in the proceedings volumes \cite{curto-mathieu} and \cite{mathieu}.

In the setting of regular operators, multiplication operators have been studied by Synnatzsche \cite{syn} and Wickstead \cite{Wickstead15}. With a similar philosophy to Vala's result, we will show that, assuming that $X$ is a Dedekind complete Banach lattice, the multiplication operator $L_AR_B\colon \Lr X\rightarrow \Lr X$ is a band projection precisely when $A$ and $B$ are band projections up to a scalar multiple (Theorem \ref{thm:eleop_proj}).

The paper is organized as follows: In Section \ref{sec:innerband}, we start by introducing the notion of inner band projections, which are actually meaningful due to Theorem \ref{thm:proj_general}. In fact, Subsection \ref{sec:constructioninnerband} is devoted to the details for the proof of Theorem \ref{thm:proj_general} and some examples. In Subsection \ref{sec:propertiesinnerband}, we show how the inner band projections associated to a pairwise disjoint family of band projections $\{P_\lambda\}_{\lambda\in \Lambda}$ form a Boolean algebra which is isomorphic to $2^{\Lambda\times\Lambda}$ (Proposition \ref{prop:boolean_alg}) and study some of their properties. In Section \ref{sec:allinner}, we study for which Banach lattices $X$ all band projections on $\mathcal L_r(X)$ are inner, and show that this is the case precisely when $X$ is atomic and order continuous (Theorem \ref{thm:allinner}). Finally, in Section \ref{sec:multiplication}, we characterize the multiplication operators $L_AR_B\colon\mathcal L_r(X)\rightarrow \mathcal L_r(X)$ which are band projections.

For background and terminology on Banach lattices and regular operators we refer the reader to the monographs \cite{AB2006, lindenstrauss-tzafriri, Meyer-Nieberg}.

\section{Inner band projections in $\Lr X$}\label{sec:innerband}

Given a Banach lattice $X$, recall that an \textit{ideal} $Y\subseteq X$ is a subspace which is closed under domination, in the sense that if $|x|\leq|y|$ with $y\in Y$, then $x\in Y$. A \textit{band} is an ideal which is also order closed, or, in other words, if $(y_\alpha)\subseteq Y$ and $\bigvee_\alpha y_\alpha$ exists in $X$, then this supremum belongs to $Y$. Finally, a \textit{projection band} is a band $Y$ such that $X=Y\oplus Y^\perp$, where $Y^\perp=\{x\in X:|x|\wedge|y|=0,\forall y\in Y\}.$ In this case, a positive projection onto $Y$ vanishing on $Y^\perp$ is called a \textit{band projection}. It is well known that a band $Y$ of a Banach lattice $X$ is a projection band precisely when $P_Y(x)=\bigvee\{y\in Y: 0\leq y\leq x\}$ exists in $X$ for every $x\geq0$ \cite[Proposition 1.a.10]{lindenstrauss-tzafriri}. It is also well known that $P$ is a band projection if and only if $P^2=P$ and $0\leq P\leq I_X$ \cite[Theorem 1.44]{AB2006}.

Let us denote by $\mathfrak P (X)$ (resp.\ $\mathfrak B (X)$) the Boolean algebra of band projections on $X$ (resp.\ of projection bands in $X$).

\subsection{Construction of inner band projections}\label{sec:constructioninnerband}

The goal of this section is to prove the following result.

\begin{thm}\label{thm:proj_general}
    Let $X$ be a Dedekind complete Banach lattice and let $\{P_\lambda
    \}_{\lambda \in \Lambda }$ be a family of pairwise disjoint band
    projections on $X$.
    Let $\Gamma \subseteq \Lambda \times \Lambda $ be any subset.
    Then the map
    \begin{equation}\label{eq:proj_general}
    \begin{array}{cccc}
    \mathcal{P} _\Gamma \colon& \Lpos X & \longrightarrow & \Lpos X \\
            & T & \longmapsto & \displaystyle\bigvee_{(\alpha ,\beta )\in \Gamma }
            P_\alpha TP_\beta  \\
    \end{array}
    \end{equation}
    extends to a unique
    band projection ${\mathcal{P}_\Gamma \colon \Lr X\to \Lr X}$.
\end{thm}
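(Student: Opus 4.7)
The plan is to verify the characterization of band projections from \cite[Theorem 1.44]{AB2006}: namely, that $\mathcal{P}_\Gamma$ is a well-defined positive linear operator on $\Lr X$ satisfying $\mathcal{P}_\Gamma^2 = \mathcal{P}_\Gamma$ and $0 \leq \mathcal{P}_\Gamma \leq I$. I would proceed in three stages: establish existence of the supremum in \eqref{eq:proj_general}; linearize; and check idempotency.

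For existence, the key observation is that each $P_\alpha T P_\beta \leq T$, since $P_\alpha, P_\beta \leq I_X$ and $T \geq 0$. Hence $\{P_\alpha T P_\beta\}_{(\alpha,\beta)\in\Gamma}$ is order bounded, and Dedekind completeness of $\Lr X$ guarantees that the supremum exists and automatically satisfies $\mathcal{P}_\Gamma(T) \leq T$. Moreover, the summands are pairwise disjoint in $\Lr X$: for $\alpha \neq \alpha'$ the ranges of $P_\alpha T P_\beta$ and $P_{\alpha'} T P_{\beta'}$ lie in disjoint bands of $X$, and symmetrically for the second index. As a consequence, finite suprema collapse to finite sums, and $\mathcal{P}_\Gamma(T)$ can be described as the directed supremum of $\sum_{(\alpha,\beta)\in F} P_\alpha T P_\beta$ as $F$ ranges over finite subsets of $\Gamma$.

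The linearization step reduces to additivity on $\Lpos X$, which follows from the pointwise identity $P_\alpha(S+T)P_\beta = P_\alpha S P_\beta + P_\alpha T P_\beta$ together with the elementary fact that suprema of upward directed families commute with addition when the two families share the same directed index set. Positive homogeneity is immediate, so one obtains a unique positive linear extension to $\Lr X$, and $\mathcal{P}_\Gamma \leq I$ follows from the existence step.

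The main obstacle is idempotency. Here I would exploit that, for fixed band projections $P$ and $Q$ on $X$, the map $S \mapsto PSQ$ is order continuous on $\Lr X$: suprema of upward directed bounded families of positive operators are computed pointwise, and band projections on $X$ are themselves order continuous. Applying this with $P = P_{\alpha'}$, $Q = P_{\beta'}$ and using the orthogonality $P_{\alpha'} P_\alpha = \delta_{\alpha \alpha'} P_\alpha$ and $P_\beta P_{\beta'} = \delta_{\beta \beta'} P_\beta$, one computes
\[
P_{\alpha'} \mathcal{P}_\Gamma(T) P_{\beta'} = \bigvee_{(\alpha,\beta)\in\Gamma} P_{\alpha'} P_\alpha T P_\beta P_{\beta'} = \begin{cases} P_{\alpha'} T P_{\beta'} & \text{if } (\alpha',\beta')\in\Gamma, \\ 0 & \text{otherwise}. \end{cases}
\]
Taking the supremum over $(\alpha',\beta')\in\Gamma$ gives $\mathcal{P}_\Gamma^2(T) = \mathcal{P}_\Gamma(T)$ for $T \geq 0$, and linearity extends this to $\Lr X$. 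Uniqueness of the band projection is a direct consequence of the decomposition $\Lr X = \Lpos X - \Lpos X$.
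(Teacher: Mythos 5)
Your proof is correct, and its skeleton (order-bounded supremum exists, finite suprema collapse to sums, additivity on the cone, then idempotency) matches the paper's sequence of lemmas; but two of the steps are executed by genuinely different devices. For the finite case, you assert pairwise disjointness of the whole family $\{P_\alpha TP_\beta\}$ and deduce sum $=$ sup from that, whereas the paper never proves disjointness in the second index: it only shows $P_\alpha S_1\wedge P_\beta S_2=0$ for $\alpha\neq\beta$ and then handles the second index by a two-sided Riesz--Kantorovich grouping argument. Your claim is true, but note that for $\alpha=\alpha'$, $\beta\neq\beta'$ the justification cannot be ``disjoint ranges'' (both operators map into $B_\alpha$); you need the dual observation that the operators are supported on disjoint bands, e.g.\ via $\inf_{0\le u\le x}P_\alpha TP_\beta u+P_\alpha TP_{\beta'}(x-u)\le P_\alpha TP_\beta(I-P_\beta)x+P_\alpha TP_{\beta'}P_\beta x=0$, so spell that out. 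The more substantial divergence is idempotency: the paper works pointwise on vectors and only proves the inequality $\mathcal P_\Gamma^2\ge\mathcal P_\Gamma$, getting the reverse for free from $0\le\mathcal P_\Gamma\le I$; you instead establish order continuity of $S\mapsto PSQ$ and pull $P_{\alpha'}$, $P_{\beta'}$ through the directed supremum to obtain the exact identity $P_{\alpha'}\mathcal P_\Gamma(T)P_{\beta'}=P_{\alpha'}TP_{\beta'}$ for $(\alpha',\beta')\in\Gamma$ and $0$ otherwise. This is cleaner and yields strictly more information (it is essentially the computation the paper later redoes inside Proposition \ref{prop:boolean_alg} to identify $\mathcal P_\Gamma\mathcal P_\Delta=\mathcal P_{\Gamma\cap\Delta}$); just make sure you apply the order-continuity interchange to the upward directed net of finite partial sums rather than to the unordered family itself, as you indicate at the end of your first stage.
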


This theorem motivates the following definition.

\begin{defi}
    Let $X$ be a Dedekind complete Banach lattice. We say that a band
    projection $\mathcal{P}$ on $\Lr X$ is an \emph{inner band
    projection} if there exists a family of pairwise disjoint
    band projections
    $\{P_{\lambda }\}_{\lambda \in \Lambda }$ on
    $X$ such that
    $\mathcal{P}=\mathcal{P}_\Gamma $, as defined in
    \cref{thm:proj_general}, for a certain $\Gamma \subseteq
    \Lambda \times \Lambda $. We will call
    \emph{inner projection bands} the bands associated with inner band
    projections, and we will denote
    $\mathcal{B}_\Gamma =\range(\mathcal{P}_\Gamma )$.
\end{defi}

We split the proof of \cref{thm:proj_general} into several lemmas. Some of them are of interest in their own, and provide tools that will be used throughout.

\begin{rem}
    Before proceeding to the results, we need to introduce some
    notation to deal with the indices. Given $\Gamma \subseteq  \Lambda
    \times \Lambda $, where $\Lambda$ is a set, we will denote the projection onto its first
    coordinate by
    \[
    \pi _1(\Gamma)=\{\, \alpha \in \Lambda  \mid (\alpha ,\beta )\in
    \Gamma     \text{ for some }\beta \in \Lambda  \, \} ,
    \]
    and the projection onto its second coordinate by
    \[
    \pi _2(\Gamma)=\{\, \beta  \in \Lambda  \mid (\alpha ,\beta )\in
    \Gamma     \text{ for some }\alpha  \in \Lambda  \, \}
    .
    \]
    For a fixed $\alpha \in \Lambda $ we will write
    \[
    \Gamma _\alpha =\{\, \beta \in \Lambda  \mid (\alpha ,\beta )\in
    \Gamma  \, \},
    \]
    and similarly, for a fixed $\beta \in \Lambda $,
    \[
    \Gamma ^\beta  =\{\, \alpha  \in \Lambda  \mid (\alpha ,\beta )\in
    \Gamma  \, \}.
    \]
\end{rem}

\begin{lem}\label{lem:finite}
    Let $X$ be a Dedekind complete Banach lattice and let $\{P_\lambda
    \}_{\lambda \in \Lambda }$ be a family of pairwise disjoint band
    projections on $X$.
    Let $\Phi \subseteq \Lambda \times \Lambda $ be a finite subset.
    Then given $T\in \Lpos X$:
    \[
    \bigvee_{(\alpha ,\beta )\in \Phi } P_\alpha TP_\beta
    =\sum_{(\alpha ,\beta )\in \Phi }^{}P_\alpha TP_\beta .
    \]
\end{lem}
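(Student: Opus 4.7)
The plan is to reduce the identity to the statement that the finite family $\{P_\alpha TP_\beta:(\alpha,\beta)\in\Phi\}$ is pairwise disjoint in $\mathcal{L}_r(X)$. Once this is known, the Riesz space identity $a+b=a\vee b + a\wedge b$ forces $a\vee b=a+b$ for any two disjoint positive elements $a,b$, and a straightforward induction extends this to any finite pairwise disjoint family, which gives the claimed formula.

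To check disjointness, fix $(\alpha,\beta)\neq(\alpha',\beta')$ in $\Phi$; then either $\alpha\neq\alpha'$ or $\beta\neq\beta'$. Suppose first that $\alpha\neq\alpha'$, and set $B_\alpha=\rng(P_\alpha)$, $B_{\alpha'}=\rng(P_{\alpha'})$; these are disjoint bands of $X$, so $B_\alpha\cap B_{\alpha'}=\{0\}$. For $x\geq 0$, the element
\[
u:=\bigl((P_\alpha TP_\beta)\wedge(P_{\alpha'}TP_{\beta'})\bigr)(x)
\]
satisfies $0\leq u\leq P_\alpha TP_\beta x\in B_\alpha$, hence $u\in B_\alpha$ since $B_\alpha$ is an ideal; symmetrically $u\in B_{\alpha'}$, and therefore $u=0$.

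Assume now that $\alpha=\alpha'$ and $\beta\neq\beta'$, and write $S_1=P_\alpha TP_\beta$, $S_2=P_\alpha TP_{\beta'}$. Since $X$ is Dedekind complete, the Riesz–Kantorovich formula applies and gives, for $x\geq 0$,
\[
(S_1\wedge S_2)(x)=\inf\{S_1 y+S_2 z : y,z\geq 0,\ y+z=x\}.
\]
The disjointness of the band projections $P_\beta$ and $P_{\beta'}$ implies $P_{\beta'}P_\beta=0$, while $P_\beta(I-P_\beta)=0$ because $P_\beta$ is idempotent. Choosing the decomposition $y=(I-P_\beta)x\geq 0$, $z=P_\beta x\geq 0$, both $S_1 y$ and $S_2 z$ vanish, and hence $(S_1\wedge S_2)(x)=0$. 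No serious obstacle is foreseen: the only ingredients beyond direct computation are the Riesz–Kantorovich formula (available thanks to the Dedekind completeness hypothesis, which also ensures that $\mathcal{L}_r(X)$ is a Banach lattice in which the suprema considered actually exist) and the elementary observation that disjoint band projections on $X$ compose to zero.
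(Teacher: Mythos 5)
Your proof is correct, but it takes a genuinely different route from the paper's. You establish directly that the whole finite family $\{P_\alpha TP_\beta\}_{(\alpha,\beta)\in\Phi}$ is pairwise disjoint in $\Lr X$ --- handling $\alpha\neq\alpha'$ by an ideal argument on the ranges (any common lower bound of $P_\alpha TP_\beta x$ and $P_{\alpha'}TP_{\beta'}x$ lies in $B_\alpha\cap B_{\alpha'}=\{0\}$) and handling $\alpha=\alpha'$, $\beta\neq\beta'$ by exhibiting the explicit decomposition $x=(I-P_\beta)x+P_\beta x$ in the Riesz--Kantorovich infimum --- and then you invoke the standard fact that a finite sum of pairwise disjoint positive elements equals their supremum. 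The paper, by contrast, only proves disjointness when the \emph{left} indices differ (its first computation shows $P_\alpha S_1\wedge P_\beta S_2=0$ for $\alpha\neq\beta$ and arbitrary positive $S_1,S_2$), groups the sum by first index, and then deals with the right indices through a two-sided Riesz--Kantorovich estimate comparing $\bigvee_{\beta\in\Phi_\alpha}P_\alpha TP_\beta$ with $P_\alpha T\bigvee_{\beta\in\Phi_\alpha}P_\beta$. Your argument is shorter and more symmetric; what the paper's longer computation buys is a slightly stronger intermediate identity (the sum equals $\bigvee_\alpha P_\alpha T\bigvee_{\beta\in\Phi_\alpha}P_\beta$, i.e.\ compatibility of the supremum with grouping by index), which is in the spirit of the manipulations reused later in the proofs of Theorem~\ref{thm:proj_general} and Proposition~\ref{prop:boolean_alg}, though it is not strictly needed for the statement of the lemma itself.
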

\begin{proof}
    Let $S_1, S_2 \in \Lpos X$.
    Given $x \in X_+$ and $\alpha \neq \beta $, the
    Riesz--Kantorovich formulae give
    \begin{align*}
        0\leq(P_\alpha S_1\wedge P_\beta S_2)(x)&= \inf_{0\le u\le x}
        P_\alpha (S_1(u))+P_\beta (S_2(x-u))\\
                                           &\le P_\alpha
                                           (S_1(x))\wedge P_\beta
                                           (S_2(x))=0.
    \end{align*}
    Hence $P_\alpha S_1\wedge P_\beta S_2=0$ whenever $\alpha \neq
    \beta $, for arbitrary $S_1,S_2 \in \Lpos X$. Using this, and the fact
    that the finite sum of pairwise disjoint elements is equal to their
    supremum, we have
    \begin{align}\label{eq:finite1}
    \sum_{(\alpha ,\beta )\in \Phi }^{} P_\alpha TP_\beta
    &=\sum_{\alpha \in \pi _1(\Phi )}^{}P_\alpha T\bigg(\sum_{\beta \in
        \Phi _\alpha }^{}P_\beta \bigg)
  \nonumber\\&=\bigvee_{\alpha \in \pi _1(\Phi )}^{}P_\alpha T\bigg(\sum_{\beta \in
        \Phi _\alpha }^{}P_\beta \bigg)
           \nonumber\\&=\bigvee_{\alpha \in \pi _1(\Phi )}^{}P_\alpha T\bigvee_{\beta \in
        \Phi _\alpha }^{}P_\beta.
    \end{align}
    Fixed $\alpha \in \pi _1(\Phi )$, note that for every $x \in X_+$
    we can apply again the Riesz--Kantorovich to get
    \begin{align*}
        \bigg(\bigvee_{\beta \in \Phi _\alpha } P_\alpha TP_\beta
        \bigg)(x)&=
            \sup \bigg\{\sum_{\beta \in \Phi _\alpha }^{}P_\alpha
            TP_\beta x_\beta \biggm|x_\beta \in X_+\text{ for }\beta \in \Phi
    _\alpha \text{ and }\sum_{\beta \in \Phi _\alpha
        }^{}x_\beta =x\bigg\}\\
        &=
            \sup \bigg\{P_\alpha T\sum_{\beta \in \Phi _\alpha }^{}
            P_\beta x_\beta \biggm|x_\beta \in X_+\text{ for }\beta \in \Phi
    _\alpha \text{ and }\sum_{\beta \in \Phi _\alpha
        }^{}x_\beta =x\bigg\}\\
        &\le
            P_\alpha T\Bigg(\sup \bigg\{\sum_{\beta \in \Phi _\alpha }^{}
            P_\beta x_\beta \biggm|x_\beta \in X_+\text{ for }\beta \in \Phi
    _\alpha \text{ and }\sum_{\beta \in \Phi _\alpha
        }^{}x_\beta =x\bigg\}\Bigg)\\
        &=P_\alpha T\bigg( \bigvee_{\beta \in \Phi _\alpha } P_\beta
            \bigg)(x),
    \end{align*}
    where the inequality is due to the fact that $P_\alpha T\ge 0$. Using this to continue in \eqref{eq:finite1}
    \[
    \sum_{(\alpha ,\beta )\in \Phi }^{} P_\alpha TP_\beta =\bigvee_{\alpha \in \pi
    _1(\Phi )}  P_\alpha T \bigvee_{\beta \in \Phi _\alpha } P_\beta
    \ge \bigvee_{\alpha \in \pi _1(\Phi )} \bigvee_{\beta \in \Phi
    _\alpha } P_\alpha TP_\beta =\bigvee_{(\alpha ,\beta )\in \Phi }
    P_\alpha TP_\beta .
    \]

    To see the reverse inequality, we have to invoke once again the
    Riesz--Kantorovich fomulae for finitely many operators. For $x \in X_+$, we
    have
    \begin{align*}
        \bigg( \bigvee_{\alpha \in \pi _1(\Phi )}&P_\alpha T
        \bigvee_{\beta \in \Phi _\alpha }P_\beta \bigg)
        (x)\\&=\sup \bigg\{\, \sum_{\alpha \in \pi _1(\Phi )}^{}\bigg(P_\alpha T \bigvee_{\beta \in
        \Phi _\alpha }P_\beta\bigg)(x_\alpha ) \biggm| x_{\alpha } \in
        X_+\text{ for }\alpha \in \pi _1(\Phi )\text{ and }
        \sum_{\alpha \in \pi _1(\Phi )}^{}x_\alpha =x \,
    \bigg\}
    \end{align*}
    and using that the supremum of finitely many pairwise disjoint elements is
    equal to their sum
    \begin{align*}
        \bigg( \bigvee_{\alpha \in \pi _1(\Phi )}&P_\alpha T
        \bigvee_{\beta \in \Phi _\alpha }P_\beta \bigg)
        (x)\\
        &=\sup \bigg\{\, \sum_{\alpha \in \pi _1(\Phi )}^{}P_\alpha T \sum_{\beta \in
        \Phi _\alpha }P_\beta x_\alpha \biggm|  x_{\alpha } \in
        X_+\text{ for }\alpha \in \pi _1(\Phi )\text{ and }
        \sum_{\alpha \in \pi _1(\Phi )}^{}x_\alpha =x \,
    \bigg\} \\
        &\le \sup \bigg\{\, \sum_{\alpha \in \pi _1(\Phi )}^{}P_\alpha T \sum_{\beta \in
            \Phi _\alpha }P_\beta x_{\alpha \beta } \biggm| x_{\alpha
            \beta } \in
        X_+\text{ for }(\alpha,\beta ) \in \Phi \text{ and }
        \sum_{(\alpha,\beta ) \in \Phi }^{}x_{\alpha \beta }=x\, \bigg\} \\
        &= \sup \bigg\{\, \sum_{(\alpha,\beta) \in \Phi  }^{}P_\alpha
            T P_\beta x_{\alpha \beta } \biggm| x_{\alpha
            \beta } \in
        X_+\text{ for }(\alpha,\beta ) \in \Phi \text{ and }
        \sum_{(\alpha,\beta ) \in \Phi }^{}x_{\alpha \beta }=x\, \bigg\} \\
        &=\bigg(\bigvee_{(\alpha,\beta)\in \Phi  }P_\alpha TP_\beta \bigg)(x),
    \end{align*}
    where the inequality may be seen as follows: let $x_\alpha \in
    X_+ $ for $\alpha \in \pi _1(\Phi )$ be such that
    $\sum_{\alpha \in \pi_1(\Phi )}^{}x_\alpha =x$, for each $\alpha
    \in \pi_1(\Phi )$ fix a certain
    $\beta _0(\alpha )\in \Phi _\alpha $ and define $x_{\alpha \beta
    }=P_\beta (x_\alpha )$ for $\beta \neq \beta _0(\alpha )$
    and $x_{\alpha \beta _0}=x_\alpha -\sum_{\beta \neq \beta
    _0(\alpha )}^{} P_\beta (x_\alpha )$, then
    clearly
    \[
    \sum_{(\alpha,\beta)\in \Phi  }^{}x_{\alpha \beta }=x
    \quad\text{and}\quad
    \sum_{\alpha \in \pi_1(\Phi )}^{}P_\alpha T \sum_{\beta \in \Phi _\alpha }^{}
    P_\beta (x_{\alpha \beta })=\sum_{\alpha \in \pi_1(\Phi )}^{}
    P_\alpha T \sum_{\beta \in \Phi _\alpha}^{}
    P_\beta (x_\alpha ).\qedhere
    \]
\end{proof}

Given a set $A$ we will denote by $\Fin(A)$ the family of finite
subsets of $A$. Note that $\Fin(A)$ is partially ordered by inclusion,
and with this order it is upwards directed.

\begin{lem}\label{lem:pointwise}
    Let $X$ be a Dedekind complete Banach lattice and let $\{P_\lambda
    \}_{\lambda \in \Lambda }$ be a family of pairwise disjoint band
    projections on $X$. Let $\Gamma \subseteq \Lambda \times \Lambda
    $ be any subset.
    Then given $T \in \Lpos X$ and $x \in X_+$:
    \[
    \bigg( \bigvee_{(\alpha ,\beta )\in \Gamma }
        P_\alpha TP_\beta \bigg)(x)=\sup_{\Phi \in \Fin(\Gamma )}
        \sum_{(\alpha ,\beta )\in \Phi }^{} P_\alpha TP_\beta x.
    \]
\end{lem}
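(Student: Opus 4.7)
The plan is to reduce the (potentially infinite) supremum on the left to a supremum of finite sums (using \cref{lem:finite}) and then invoke the standard fact that suprema of bounded upward-directed nets of positive operators on a Dedekind complete Banach lattice are computed pointwise on $X_+$.

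More precisely, first I would observe that the family
$\{ \mathcal{P}_\Phi T : \Phi \in \Fin(\Gamma)\}$, where $\mathcal{P}_\Phi T := \sum_{(\alpha,\beta)\in\Phi} P_\alpha T P_\beta = \bigvee_{(\alpha,\beta)\in\Phi} P_\alpha T P_\beta$ (by \cref{lem:finite}), is upward directed: given $\Phi_1,\Phi_2\in\Fin(\Gamma)$, their union lies in $\Fin(\Gamma)$ and dominates both, since each summand $P_\alpha T P_\beta$ is positive. Second, this family is bounded above: letting $Q_1=\sum_{\alpha\in\pi_1(\Phi)}P_\alpha$ and $Q_2=\sum_{\beta\in\pi_2(\Phi)}P_\beta$, which are band projections on $X$ by the pairwise disjointness of $\{P_\lambda\}$, one has
\[
\mathcal{P}_\Phi T \le Q_1 T Q_2 \le T,
\]
since $0\le Q_i\le I_X$ and $T\ge 0$. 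So the net $(\mathcal{P}_\Phi T)_{\Phi\in\Fin(\Gamma)}$ is an upward-directed family in $\Lpos X$ bounded above by $T$.

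Now, since $X$ is Dedekind complete, so is $\mathcal{L}_r(X)$, and hence this net has a supremum $S\in\Lpos X$. The elementary lattice-theoretic identity
\[
\bigvee_{(\alpha,\beta)\in\Gamma} P_\alpha T P_\beta \;=\; \bigvee_{\Phi\in\Fin(\Gamma)}\bigvee_{(\alpha,\beta)\in\Phi} P_\alpha T P_\beta \;=\; \bigvee_{\Phi\in\Fin(\Gamma)} \mathcal{P}_\Phi T
\]
(whose first equality uses the general fact that a supremum in a lattice, when it exists, coincides with the supremum of its finite sub-suprema, and whose second equality is \cref{lem:finite}) then simultaneously shows that the left-hand side of the claim exists and equals $S$.

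Finally, to pass to pointwise evaluation, I would invoke the well-known fact that if $(S_i)$ is an upward directed net of positive operators on a Dedekind complete Banach lattice with supremum $S$, then $S(x)=\sup_i S_i(x)$ for every $x\in X_+$ (see e.g.\ \cite[Theorem~1.19]{AB2006}). Applied to $S_i=\mathcal{P}_\Phi T$ this yields the desired formula. The only mildly delicate point is the upper bound $\mathcal{P}_\Phi T\le T$, which requires the disjointness of the $P_\alpha$ to combine them into a single band projection; everything else is routine.
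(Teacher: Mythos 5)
Your proof is correct and follows essentially the same route as the paper's: reduce the supremum over $\Gamma$ to the directed net of finite sub-suprema, identify each of these with a finite sum via \cref{lem:finite}, and conclude by the fact that suprema of bounded increasing nets in $\Lr X$ are computed pointwise on $X_+$. The only difference is that you spell out the upper bound $\mathcal{P}_\Phi T\le T$ and the existence of the supremum explicitly, which the paper leaves implicit.
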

\begin{proof}
    It is clear that
    \[
    \bigvee_{(\alpha ,\beta )\in \Gamma }
    P_\alpha TP_\beta =\sup_{\Phi \in \Fin(\Gamma )} \bigvee_{(\alpha
    ,\beta )\in \Phi } P_\alpha TP_\beta .
    \]
    Note that $\{\bigvee_{(\alpha ,\beta )\in \Phi } P_\alpha TP_\beta
    \}_{\Phi \in \Fin(\Gamma )}$ is a net in $\Lr X$ bounded by $T$
    and increasing, as
    \[
    \bigvee_{(\alpha ,\beta )\in \Phi_1 } P_\alpha TP_\beta\le
    \bigvee_{(\alpha ,\beta )\in \Phi_2 } P_\alpha TP_\beta
    \quad\text{whenever } \Phi _1\subseteq \Phi _2.
    \]
    Since the supremum in
    $\Lr X$ of increasing and bounded nets is taken pointwise, we have that for any given $x \in X_+$,
    \[
    \bigg(\bigvee_{(\alpha ,\beta )\in \Gamma }
    P_\alpha TP_\beta\bigg)(x) =\sup_{\Phi \in \Fin(\Gamma )} \bigg(\bigvee_{(\alpha
    ,\beta )\in \Phi } P_\alpha TP_\beta \bigg)(x)=\sup_{\Phi \in
    \Fin(\Gamma )}\sum_{(\alpha ,\beta )\in \Phi } P_\alpha
    TP_\beta x,
    \]
    where in the last equality we have applied \cref{lem:finite}.
\end{proof}

Even though the map
$\mathcal{P}_\Gamma $ is defined in
\eqref{eq:proj_general} using a supremum, it turns out to be
additive:

\begin{lem}\label{lem:additive}
    Let $X$ be a Dedekind complete Banach lattice and let $\{P_\lambda
    \}_{\lambda \in \Lambda }$ be a family of pairwise disjoint band
    projections on $X$. Let $\Gamma \subseteq  \Lambda \times \Lambda
    $ be any subset. Then
    the map $\mathcal{P}_\Gamma \colon \Lpos X\to \Lpos X$, defined in
    \eqref{eq:proj_general}, is additive.
\end{lem}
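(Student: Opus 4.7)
The plan is to reduce operator-level additivity $\mathcal{P}_\Gamma(S+T)=\mathcal{P}_\Gamma(S)+\mathcal{P}_\Gamma(T)$ to a pointwise identity on $X_+$, using the explicit description of $\mathcal{P}_\Gamma(T)(x)$ supplied by \cref{lem:pointwise}. Since both sides are positive operators on $X$, it suffices to evaluate them at an arbitrary $x\in X_+$.

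Fix $S,T\in\Lpos X$ and $x\in X_+$. By \cref{lem:pointwise}, each of $\mathcal{P}_\Gamma(S+T)(x)$, $\mathcal{P}_\Gamma(S)(x)$, $\mathcal{P}_\Gamma(T)(x)$ is a supremum of the corresponding net of finite sums indexed by the \emph{same} directed set $\Fin(\Gamma)$. On the finite-sum level, linearity of composition gives
\[
\sum_{(\alpha,\beta)\in\Phi} P_\alpha (S+T) P_\beta\, x = \sum_{(\alpha,\beta)\in\Phi} P_\alpha S P_\beta\, x + \sum_{(\alpha,\beta)\in\Phi} P_\alpha T P_\beta\, x.
\]
Writing $u_\Phi$ and $v_\Phi$ for the two summands on the right, one checks that both are increasing nets in $X_+$ indexed by $\Fin(\Gamma)$: if $\Phi_1\subseteq\Phi_2$, then $u_{\Phi_2}-u_{\Phi_1}$ is a finite sum of terms of the form $P_\alpha S P_\beta x\ge 0$, and similarly for $v$.

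The one technical step I would then invoke is that, for two increasing nets $\{u_\Phi\},\{v_\Phi\}$ in a Dedekind complete Riesz space indexed by the same directed set, one has $\sup_\Phi(u_\Phi+v_\Phi)=\sup_\Phi u_\Phi+\sup_\Phi v_\Phi$. This is a standard property of order continuity of addition: the $\le$ direction is immediate, while for $\ge$ one fixes $\Phi_0$, uses $u_{\Phi_0}+\sup_\Phi v_\Phi=\sup_\Phi(u_{\Phi_0}+v_\Phi)\le\sup_\Phi(u_\Phi+v_\Phi)$, and then takes the supremum over $\Phi_0$. Combining this with \cref{lem:pointwise} applied to $S$, $T$ and $S+T$ yields $\mathcal{P}_\Gamma(S+T)(x)=\mathcal{P}_\Gamma(S)(x)+\mathcal{P}_\Gamma(T)(x)$ for every $x\in X_+$, which is the desired additivity.

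I expect no serious obstacle here: \cref{lem:pointwise} has already done the heavy lifting of identifying $\mathcal{P}_\Gamma(T)(x)$ with an increasing supremum of linear expressions in $T$, so additivity is essentially the distributivity of the supremum over addition for codirected increasing nets. The only point one should not gloss over is that the two nets live over the \emph{same} directed set $\Fin(\Gamma)$, which is exactly what \cref{lem:pointwise} provides.
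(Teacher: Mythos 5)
Your argument is correct and proves the lemma. It is organized somewhat differently from the paper's: you first pass to pointwise evaluation at $x\in X_+$ via \cref{lem:pointwise} and then invoke the general fact that, for two increasing nets indexed by a common directed set in a Dedekind complete Riesz space, the supremum of the sum equals the sum of the suprema; the paper instead stays at the level of operators in $\Lr X$, proving $\mathcal{P}_\Gamma(T+S)\le\mathcal{P}_\Gamma(T)+\mathcal{P}_\Gamma(S)$ directly from the definition, and obtaining the reverse inequality by fixing two finite sets $\Phi_1,\Phi_2\in\Fin(\Gamma)$, merging them into $\Phi_1\cup\Phi_2$, converting finite suprema into sums via \cref{lem:finite}, and then taking the two suprema successively. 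The two reverse-inequality arguments are the same double-supremum device in different clothing (your ``fix $\Phi_0$, sup over $\Phi$, then sup over $\Phi_0$'' is the paper's ``fix $\Phi_2$, sup over $\Phi_1$, then sup over $\Phi_2$''), but your packaging is arguably cleaner, since it isolates order continuity of addition as a reusable lemma and leans on \cref{lem:pointwise}, which is already available at this point in the paper. The only step worth making explicit is that all three suprema exist: the nets $u_\Phi$, $v_\Phi$ and $u_\Phi+v_\Phi$ are bounded above by $Sx$, $Tx$ and $(S+T)x$ respectively (by \cref{lem:finite} and $\bigvee_{(\alpha,\beta)\in\Phi}P_\alpha TP_\beta\le T$), and $X$ is Dedekind complete, so the exchange of supremum and addition is legitimate.
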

\begin{proof}
    Given $T,S \in \Lpos X$, we will check that $\mathcal{P}_\Gamma
    (T+S)=\mathcal{P}_\Gamma (T)+\mathcal{P}_\Gamma (S)$ by showing
    the two inequalities. Using that the supremum of the sum is less
    than or equal to the sum  of the suprema, we get the following
    inequality:
    \begin{align*}
        \mathcal{P}_\Gamma (T+S)&=\bigvee_{(\alpha ,\beta )\in \Gamma
        } P_\alpha (T+S)P_\beta \\
        &=\bigvee_{(\alpha ,\beta )\in \Gamma
        } (P_\alpha TP_\beta +P_\alpha SP_\beta) \\
        &\le \bigvee_{(\alpha ,\beta )\in \Gamma
        } P_\alpha TP_\beta +\bigvee_{(\alpha ,\beta )\in \Gamma } P_\alpha SP_\beta \\
        &=\mathcal{P}_\Gamma (T)+\mathcal{P}_\Gamma (S).
    \end{align*}

    For the reverse inequality, fix $\Phi _1,\Phi _2 \in \Fin(\Gamma
    )$. Using \cref{lem:finite} we have
    \begin{align*}
        \bigvee_{(\alpha ,\beta )\in \Phi _1} P_\alpha TP_\beta
        +\bigvee_{(\alpha ,\beta )\in \Phi _2} P_\alpha SP_\beta &\le
        \bigvee_{(\alpha ,\beta )\in \Phi _1 \cup \Phi _2}  P_\alpha
        TP_\beta +\bigvee_{(\alpha ,\beta )\in \Phi _1\cup \Phi
        _2}P_\alpha SP_\beta  \\
        &=\sum_{(\alpha ,\beta)\in \Phi _1\cup \Phi _2}^{}P_\alpha
        TP_\beta +\sum_{(\alpha ,\beta )\in \Phi _1\cup \Phi _2}^{}P_\alpha
        SP_\beta \\
        &=\sum_{(\alpha ,\beta )\in \Phi _1\cup \Phi _2}^{}P_\alpha
        (T+S)P_\beta \\
        &=\bigvee_{(\alpha ,\beta )\in \Phi _1\cup \Phi _2} P_\alpha
        (T+S)P_\beta \\
        &\le \mathcal{P}_\Gamma (T+S).
    \end{align*}
    Taking
    supremum over $\Phi _1 \in \Fin(\Gamma )$ in the previous
    inequality we get
    \[
    \mathcal{P}_\Gamma (T)+\bigvee_{(\alpha ,\beta ) \in \Phi _2}
    P_\alpha SP_\beta \le \mathcal{P}_\Gamma (T+S).
    \]
    But $\Phi _2 \in \Fin(\Gamma )$ is arbitrary, so taking again
    supremum over these sets we arrive at $\mathcal{P}_\Gamma
    (T)+\mathcal{P}_\Gamma (S)\le \mathcal{P}_\Gamma (T+S)$.
\end{proof}

Now we can present the proof of \cref{thm:proj_general}.

\begin{proof}[Proof of \cref{thm:proj_general}]
    We have shown in \cref{lem:additive} that the map $\mathcal{P}_\Gamma \colon \Lpos X\to \Lpos X$ is additive.
    Therefore, it extends to a unique positive map,
    which we shall denote the same way, $\mathcal{P}_\Gamma \colon \Lr
    X\to \Lr X$. From the definition of $\cP_\Gamma$ it is clear that $0\le \mathcal{P}_\Gamma (T)\le T$ for
    any $T \in \Lpos X$, so $0\le \mathcal{P}_\Gamma \le I_{\Lr X}$.
    For $\mathcal{P}_\Gamma $ to be a
    band projection it only remains to
    check that $\mathcal{P}_\Gamma \mathcal{P}_\Gamma
    =\mathcal{P}_\Gamma $.

    From $0\le \mathcal{P}_\Gamma \le I_{\Lr X}$ immediately
    follows
    $\mathcal{P}_\Gamma \mathcal{P}_\Gamma\le \mathcal{P}_\Gamma $.
    It only remains to show the other inequality. Fix $\Phi \in \Fin(\Gamma )$, then
    using Lemmas~\ref{lem:finite} and~\ref{lem:pointwise} we have that for any $T \in
    \Lpos X$ and $x \in
    X_+$:
    \begin{align*}
        \bigg(\bigvee_{(\alpha ,\beta )\in \Phi } P_\alpha \mathcal{P}_\Gamma
        (T)P_\beta\bigg)(x)&=\sum_{(\alpha ,\beta )\in \Phi
        }^{}P_\alpha \mathcal{P}_\Gamma (T)P_\beta x\\
        &=\sum_{(\alpha ,\beta )\in \Phi  }^{}P_\alpha
        \bigg(\sup_{\Psi \in \Fin(\Gamma )} \sum_{(\gamma ,\delta )\in
        \Psi }^{}P_\gamma TP_\delta P_\beta x\bigg)\\
        &=\sum_{(\alpha ,\beta )\in \Phi  }^{}P_\alpha
        \bigg(\sup_{\Psi \in \Fin(\Gamma )} \sum_{\gamma  \in \Psi
        ^{\beta  }}^{}P_\gamma TP_\beta x\bigg)\\
        &\ge \sum_{(\alpha ,\beta )\in \Phi  }^{}
        \bigg(\sup_{\Psi \in \Fin(\Gamma )} P_\alpha \sum_{\gamma  \in \Psi
        ^{\beta  }}^{}P_\gamma TP_\beta x\bigg)\\
        &=\sum_{(\alpha ,\beta )\in \Phi }^{}P_\alpha TP_\beta
        x=\bigg(\bigvee_{(\alpha ,\beta )\in \Phi } P_\alpha TP_\beta
        \bigg)(x).
    \end{align*}
    Hence $\bigvee_{(\alpha ,\beta )\in \Phi } P_\alpha
    \mathcal{P}_\Gamma (T)P_\beta \ge \bigvee_{(\alpha ,\beta )\in \Phi }
    P_\alpha TP_\beta $ for any $\Phi \in \Fin(\Gamma )$. With this we
    arrive at
    \begin{align*}
        \mathcal{P}_\Gamma \mathcal{P}_\Gamma (T)&=\bigvee_{(\alpha
        ,\beta )\in \Gamma } P_\alpha \mathcal{P}_\Gamma (T)P_\beta \\&=\sup_{\Phi \in
        \Fin(\Gamma )} \bigvee_{(\alpha ,\beta )\in \Phi } P_\alpha
        \mathcal{P}_\Gamma (T)P_\beta \\ &\ge \sup_{\Phi \in
        \Fin(\Gamma )} \bigvee_{(\alpha ,\beta )\in \Phi} P_\alpha
            TP_\beta \\ &=\mathcal{P}_\Gamma (T),
    \end{align*}
    as we wanted to show.
\end{proof}

\begin{rem}[Order continuous case]\label{rem:ordcont_inner}
    Let us check how are inner band projections when $X$ is an order continuous Banach lattice.
    Let $\{B_\lambda
    \}_{\lambda \in \Lambda }$ be a family of bands in $X$ such that
    $X=\bigoplus_{\lambda \in \Lambda }B_\lambda $; that is, any $x \in X$ can be expressed uniquely as $x=\sum_{\lambda \in \Lambda} P_\lambda x$, where $P_\lambda$ denotes the band projection onto $B_\lambda$, at most countably many of the $P_\lambda x$ are nonzero, and the series converges unconditionally. For example, if $A$ is a maximal family of disjoint positive elements of $X$, then $X=\bigoplus_{a\in A} B_a$, where $B_a$ is the band generated by $a\in A$,

    Fix $\Gamma\subseteq \Lambda\times\Lambda$. For any given $T\in\Lpos X$ and $x \in X_+$, the set \[\{\, P_\alpha TP_\beta x  \mid (\alpha ,\beta )\in \Gamma \, \}\] has at most countably many nonzero terms. Let $(a_n)_{n=1}^\infty$ be any enumeration of those terms. Using \cref{lem:pointwise} and the fact that $a_n\ge 0$,
    we get
    \begin{align*}
        \mathcal{P}_\Gamma (T)(x)&=\bigg(\bigvee_{(\alpha ,\beta )\in
        \Gamma } P_\alpha TP_\beta\bigg)(x)\\&=\sup_{\Phi \in \Fin(\Gamma )}
    \sum_{(\alpha ,\beta )\in \Phi  }^{} P_\alpha TP_\beta x\\
                                 &=\sup_{\Phi \in \Fin(\N)}
    \sum_{ n \in \Phi }^{} a_n=\sum_{n=1}^{\infty}a_n=\sum_{(\alpha
    ,\beta )\in \Gamma }^{}P_\alpha TP_\beta x.
    \end{align*}
    We can conclude that, in the order continuous case, inner band projections are computed using infinite sums.
\end{rem}

\begin{ex}
    Let $X$ be a space with a 1-unconditional basis $(e_\lambda)_{\lambda\in\Lambda}$ (with $\Lambda$ a possibly uncountable set). For
    every $\lambda  \in \Lambda $, consider the band projection
    \[
    \begin{array}{cccc}
    P_\lambda  \colon& X & \longrightarrow & X \\
                   & x & \longmapsto & x(\lambda)e_{\lambda  } \\
    \end{array}
    \]
   One can easily check that $\{P_\lambda\}_{\lambda \in \Lambda}$ is a family of pairwise disjoint band projections on $X$. So we are under the conditions of \cref{thm:proj_general}. Moreover, it is also true that $x=\sum_{\lambda \in \Lambda } P_\lambda x$ for every $x \in X$, in the sense of \cref{rem:ordcont_inner}. According to this same remark, for each $\Gamma \subseteq \Lambda \times \Lambda $ we have a band
    projection
    \[
    \begin{array}{cccc}
        \mathcal{P}_\Gamma \colon& \Lr{X} &
        \longrightarrow & \Lr{X} \\
            & T & \longmapsto & \displaystyle\sum_{(\alpha ,\beta )\in \Gamma }^{}
            P_\alpha TP_\beta \\
    \end{array}
    \]
    It will follow from \cref{thm:allinner} that these
    are all the band projections on $\Lr {X}$.
\end{ex}

\begin{ex}
    Consider the Dedekind complete Banach lattice $\el \infty (\mathbb N)$. In this
    space, the
    band projections $P_i$ onto the $i$-th coordinate satisfy the
    hypotheses of \cref{thm:proj_general}. Fix some subset $\Gamma
    \subseteq \N\times \N$. We want to illustrate with some
    examples how one can use \cref{lem:pointwise} to compute
    $\mathcal{P}_\Gamma (T)$ explicitly for certain operators $T \in \Lr{\el\infty }$.

    Fixed some $a=(a_n)_{n=1}^{\infty }\in (\el \infty )_+$, define the
    positive operator
    \[
    \begin{array}{cccc}
    T_a\colon& \el \infty  & \longrightarrow & \el \infty  \\
            & x & \longmapsto & (a_1x_1,a_2x_2,\ldots ) \\
    \end{array}
    \]
    Let us denote by $\Delta =\{\, (n,n) \mid n \in \N \, \} \subseteq \N\times \N$ the diagonal of $\N\times \N$. For any $\Phi \in \Fin(\N\times \N)$ and $x \in X_+$ we have
    \[
    \sum_{(\alpha ,\beta )\in \Phi }^{}P_\alpha TP_\beta
    x=\sum_{\alpha \in \pi _1(\Phi )}^{
    }P_\alpha T \sum_{\beta \in \Phi _\alpha }^{}P_\beta x=
    \begin{cases}
        a_nx_n&\text{ if }n \in \Delta \cap \Phi, \\
        0&\text{ if }n \not\in \Delta \cap \Phi,
    \end{cases}
    \]
    which implies, by \cref{lem:pointwise},
    \[
    \mathcal{P}_\Gamma (T_a)x=\sup_{\Phi \in \Fin(\Gamma )}
    \sum_{(\alpha ,\beta )\in \Phi }^{} P_\alpha TP_\beta x=
    \begin{cases}
        a_nx_n&\text{ if }n \in \Delta \cap \Gamma, \\
        0&\text{ if }n \not\in \Delta \cap \Gamma.
    \end{cases}
    \]
    Therefore, $\mathcal{P}_\Gamma (T_a)=T_b$, where $b_n=a_n$ if $n
    \in \Delta \cap \Gamma $, and zero otherwise.

    Now consider the shift operator
    \[
    \begin{array}{cccc}
    S\colon& \el \infty  & \longrightarrow & \el \infty  \\
            & x & \longmapsto & (x_2,x_3,\ldots ) \\
    \end{array}
    \]
    which is a positive operator. For any $\Phi \in \Fin(\N\times \N)$ and $x \in
    X_+$ we have
    \[
    \sum_{(\alpha ,\beta )\in \Phi }^{}P_\alpha SP_\beta x=
    \begin{cases}
        x_{n+1}&\text{ if }(n,n+1)\in \Phi, \\
        0&\text{ if }(n,n+1)\not\in \Phi.
    \end{cases}
    \]
    and using again \cref{lem:pointwise}
    \[
    \mathcal{P}_\Gamma (S)x=\sup_{\Phi \in \Fin(\Gamma )}
    \sum_{(\alpha ,\beta )\in \Phi }^{} P_\alpha SP_\beta x=
    \begin{cases}
        x_{n+1}&\text{ if }(n,n+1) \in \Gamma, \\
        0&\text{ if }(n,n+1) \not\in \Gamma.
    \end{cases}
    \]
\end{ex}

\subsection{Properties of inner band projections}\label{sec:propertiesinnerband} Now that we have properly defined inner band projections, we can derive some of their properties.
The following result shows that inner band projections form a Boolean algebra with the order and lattice operations inherited from the space of regular operators.

\begin{prop}\label{prop:boolean_alg}
    Let $X$ be a Dedekind complete Banach lattice and let $\{P_\lambda
    \}_{\lambda \in \Lambda }$ be a family of pairwise disjoint band
    projections on $X$. The inner band projections $\{\,
    \mathcal{P}_\Gamma  \mid \Gamma  \subseteq \Lambda
    \times \Lambda  \, \} $ (resp.\ the inner projection bands $\{\,
    \mathcal{B}_\Gamma  \mid \Gamma \subseteq \Lambda \times \Lambda  \,
    \} $) form a Boolean algebra with the same order, suprema and
    infima as in $\mathfrak{P}(\Lr X)$ (resp.\ $\mathfrak{B}(\Lr
    X)$). Moreover, this Boolean algebra is
    isomorphic to $2^{\Lambda \times \Lambda }$ through the map $\Gamma
    \mapsto \mathcal{P}_\Gamma $ (resp.\ $\Gamma \mapsto
    \mathcal{B}_\Gamma $).
\end{prop}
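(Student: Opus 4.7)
The plan is to exhibit $\Gamma \mapsto \mathcal{P}_\Gamma$ as an injective Boolean morphism from $2^{\Lambda \times \Lambda}$ into $\mathfrak{P}(\Lr X)$. Since two band projections on $\Lr X$ that agree on $\Lpos X$ must coincide, every identity I need reduces, via \cref{lem:pointwise}, to a pointwise computation at $T \in \Lpos X$ and $x \in X_+$. Concretely, I will establish $\mathcal{P}_\emptyset = 0$, the join formula $\mathcal{P}_\Gamma \vee \mathcal{P}_{\Gamma'} = \mathcal{P}_{\Gamma \cup \Gamma'}$, the meet formula $\mathcal{P}_\Gamma \wedge \mathcal{P}_{\Gamma'} = \mathcal{P}_\Gamma \mathcal{P}_{\Gamma'} = \mathcal{P}_{\Gamma \cap \Gamma'}$, and injectivity of the assignment. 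Together, these force the image to form a Boolean algebra with the order, meets, and joins inherited from $\mathfrak{P}(\Lr X)$, with top $\mathcal{P}_{\Lambda \times \Lambda}$ and the complement of $\mathcal{P}_\Gamma$ given by $\mathcal{P}_{(\Lambda \times \Lambda) \setminus \Gamma}$, and make the map the desired isomorphism. The parallel statement for projection bands then follows through the canonical order-isomorphism $\mathfrak{P}(\Lr X) \cong \mathfrak{B}(\Lr X)$, $\mathcal{P} \mapsto \range(\mathcal{P})$.

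The join identity is the easier half: since sups of increasing bounded nets in $\Lr X$ are computed pointwise, the join of two band projections evaluated at $T \geq 0$ equals the pointwise join of their values, and applying \eqref{eq:proj_general} gives
\[
\bigvee_{(\alpha,\beta)\in\Gamma}P_\alpha TP_\beta\;\vee\;\bigvee_{(\alpha,\beta)\in\Gamma'}P_\alpha TP_\beta=\bigvee_{(\alpha,\beta)\in\Gamma\cup\Gamma'}P_\alpha TP_\beta=\mathcal{P}_{\Gamma\cup\Gamma'}(T).
\]
The main computational step is the meet identity $\mathcal{P}_\Gamma\mathcal{P}_{\Gamma'}=\mathcal{P}_{\Gamma\cap\Gamma'}$. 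For this I expand $\mathcal{P}_\Gamma(\mathcal{P}_{\Gamma'}(T))(x)$ using \cref{lem:pointwise,lem:finite}, slide each outer $P_\alpha$ and $P_\beta$ past the inner supremum (using order continuity of the band projections on $X$ together with the pointwise nature of sups in $\Lr X$), and then invoke pairwise disjointness $P_\alpha P_\gamma=\delta_{\alpha\gamma}P_\alpha$ and $P_\delta P_\beta=\delta_{\beta\delta}P_\beta$; this collapses the double sum so that only terms with $(\alpha,\beta)\in\Gamma\cap\Gamma'$ contribute, and a reindexing yields $\mathcal{P}_{\Gamma\cap\Gamma'}(T)(x)$. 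I expect this interchange of nested suprema to be the delicate point.

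For injectivity, I may assume every $P_\lambda \ne 0$ (discarding trivial indices). Given, without loss of generality, $(\alpha_0,\beta_0)\in\Gamma\setminus\Gamma'$, I build a rank-one positive operator $T = v\otimes\varphi$ with $v\in P_{\alpha_0}(X)_+\setminus\{0\}$ and $\varphi\in X^*_+$ nonzero satisfying $\varphi=\varphi\circ P_{\beta_0}$ (such $\varphi$ is standard: pick a positive $w\in P_{\beta_0}(X)\setminus\{0\}$, a positive functional on $X$ nonzero at $w$, and compose with $P_{\beta_0}$). Pairwise disjointness of the $P_\lambda$ gives $P_\alpha TP_\beta=T$ if $(\alpha,\beta)=(\alpha_0,\beta_0)$ and $0$ otherwise, so by \cref{lem:pointwise}, $\mathcal{P}_\Gamma(T)=T\ne 0=\mathcal{P}_{\Gamma'}(T)$. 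With the meet and join identities in place, $\mathcal{P}_\Gamma\wedge\mathcal{P}_{(\Lambda\times\Lambda)\setminus\Gamma}=\mathcal{P}_\emptyset=0$ and $\mathcal{P}_\Gamma\vee\mathcal{P}_{(\Lambda\times\Lambda)\setminus\Gamma}=\mathcal{P}_{\Lambda\times\Lambda}$ confirm that complementation in the image corresponds to set-complement in $2^{\Lambda\times\Lambda}$, finishing the isomorphism.
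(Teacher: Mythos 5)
Your proposal is correct and follows essentially the same route as the paper: the meet formula $\mathcal{P}_\Gamma\mathcal{P}_{\Gamma'}=\mathcal{P}_{\Gamma\cap\Gamma'}$ via the pointwise computation (sliding the outer band projections past the inner supremum by order continuity, then using pairwise disjointness), and injectivity via a rank-one positive operator with range in $B_{\alpha_0}$ and functional supported in $B_{\beta_0}$, are exactly the paper's arguments. The only deviation is the join, where you invoke the identity $(\mathcal P\vee\mathcal Q)(T)=\mathcal P(T)\vee\mathcal Q(T)$ for band projections and $T\ge 0$ directly (valid, though its justification is the lattice calculus of band projections on the Dedekind complete space $\Lr X$, not the pointwise computation of suprema of increasing nets), whereas the paper first proves $\mathcal P_\Gamma+\mathcal P_\Delta=\mathcal P_{\Gamma\cup\Delta}$ for disjoint index sets and then uses $\mathcal P\vee\mathcal Q=\mathcal P+\mathcal Q-\mathcal P\mathcal Q$.
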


\begin{rem}\label{rem:noninner}
    Even though inner band projections form a Boolean algebra with the same operations as in $\mathfrak{P}(\Lr X)$, they do not form, in general, a Boolean subalgebra of $\mathfrak{P}(\Lr X)$. The reason for this is that the maximal element in the Boolean algebra of inner band projections is $\mathcal{P}_{\Lambda \times \Lambda }$, which need not be the identity, the maximal element in $\mathfrak{P}(\Lr X)$. This is true even if we assume that $\{P_\lambda\}_{\lambda\in\Lambda}$ form a decomposition of $X$, so as to have the identity $x=\bigvee_{\lambda\in\Lambda} P_\lambda x$ for every $x \in X_+$.

    Let us illustrate this with an example. Consider the Banach lattice $\el \infty$ with the disjoint band projections $\{P_n\}_{n\in \N}$, where $P_n$ is the projection corresponding to the $n$-th coordinate. We have $x=\bigvee_{n \in \N} P_n x$ for every $x \in X_+$. Let $\phi \in \el \infty^*$ be a Banach limit, and consider the positive operator $T=\phi\otimes e_1$, where $e_1=(1,0,0,\ldots)$. Then clearly $TP_n=0$, for every $n\in\N$, so $\mathcal{P}_{\N \times \N}(T)=0$, while $T\neq 0$. This implies, in particular, that $\mathcal{P}_{\N \times \N}$ is not the identity on $\Lr X$.
\end{rem}

\begin{proof}[Proof of \cref{prop:boolean_alg}]
    It suffices to show the result for the inner band projections.
    Note first that the minimal and maximal elements in the family of
    inner band projections are $\mathbb{0}=\mathcal{P}_\O$ and
    $\mathbb{1}=\mathcal{P}_{\Lambda \times \Lambda }$. As observed
    before the proposition, $\mathcal{P}_{\Lambda \times \Lambda }$
    need not be the identity, which is the maximal element of
    $\mathfrak{P}(\Lr X)$.
    % This is the reason the inner band
    % projections don't form in general a Boolean
    % subalgebra of $\mathfrak{P}(\Lr X)$. But we are going to show that
    % they still form a Boolean algebra
    % with the same order, suprema and infima as in $\mathfrak{P}(\Lr
    % X)$.

    Fix
    $\Gamma ,\Delta \subseteq \Lambda \times \Lambda $ and let us
    start by showing that the infimum of two inner band projections $\mathcal{P}_\Gamma
    \wedge \mathcal{P}_\Delta =\mathcal{P}_\Gamma \mathcal{P}_\Delta
    $ is given by $\mathcal{P}_{\Gamma \cap \Delta }$. For this, let $\Phi \in \Fin(\Gamma )$, $T \in \Lpos X$ and $x
    \in X_+$. Using \cref{lem:pointwise} we have
    \begin{align}\label{eq:inf_innerproj}
        \sum_{(\alpha ,\beta )\in \Phi }^{} P_\alpha
        \mathcal{P}_\Delta (T)P_\beta x&=\sum_{(\alpha ,\beta )\in
        \Phi }^{} P_\alpha \sup_{\Psi \in \Fin(\Delta )} \sum_{(\gamma
        ,\delta )\in \Psi }^{} P_\gamma TP_\delta P_\beta x\nonumber\\ \nonumber
       &= \sum_{(\alpha ,\beta )\in \Phi }^{}P_\alpha \sup_{\Psi \in
       \Fin(\Delta )} \sum_{\gamma \in \Psi ^{\beta }}^{}
            P_\gamma TP_\beta x\\
&= \sum_{(\alpha ,\beta )\in \Phi }^{}P_\alpha \bigg(\sup_{\Psi \in
       \Fin(\Delta )} \sum_{\gamma \in \Psi ^{\beta }}^{}
            P_\gamma\bigg) TP_\beta x,
    \end{align}
    where in the last identity we have used that $\{\sum_{\gamma \in
    \Psi ^{\beta }}^{} P_\gamma \}_{\Psi \in \Fin(\Delta )}$ is an
    increasing and bounded net.
    Note that
    \[
    P_\alpha \bigg(\sup_{\Psi \in
       \Fin(\Delta )} \sum_{\gamma \in \Psi ^{\beta }}^{}
            P_\gamma\bigg) =P_\alpha \bigg(\sup_{\Psi \in
       \Fin(\Delta )} \bigvee_{\gamma \in \Psi ^{\beta }}^{}
            P_\gamma\bigg) =P_\alpha \bigvee_{\gamma \in \Delta
            ^{\beta }} P_\gamma.
    \]
    Using that the projection bands $\{P_\lambda \}_{\lambda \in
    \Lambda }$ are pairwise disjoint, we can continue in
    \eqref{eq:inf_innerproj} to get
    \[
    \sum_{(\alpha ,\beta )\in \Phi }^{} P_\alpha \mathcal{P}_\Delta
    (T)P_\beta x=\sum_{\substack{(\alpha ,\beta )\in \Phi \\ \alpha
    \in \Delta ^{\beta }}}^{} P_\alpha TP_\beta x=\sum_{(\alpha ,\beta
)\in \Phi \cap \Delta }^{}P_\alpha TP_\beta .
    \]
    Applying this
    \begin{align*}
        \mathcal{P}_\Gamma (\mathcal{P}_\Delta (T))(x)&=\sup_{\Phi \in
        \Fin(\Gamma )} \sum_{(\alpha ,\beta )\in \Phi }^{}P_\alpha
        \mathcal{P}_\Delta (T)P_\beta x\\
          &=\sup_{\Phi \in \Fin(\Gamma )} \sum_{(\alpha ,\beta )\in
          \Phi \cap \Delta }^{}P_\alpha TP_\beta x\\
          &=\sup_{\Phi \in \Fin(\Gamma \cap \Delta )}\sum_{(\alpha ,\beta )\in
          \Phi }^{}P_\alpha TP_\beta x\\
          &=\mathcal{P}_{\Gamma \cap \Delta }(T)(x).
    \end{align*}

    For the computations on the complementary and the supremum, we
    need to show first that $\mathcal{P}_\Gamma +\mathcal{P}_\Delta
    =\mathcal{P}_{\Gamma \cup \Delta }$ whenever $\Gamma \cap \Delta
    =\O$. Indeed, take as usual $T \in \Lpos X$ and $x \in X_+$, then
    \begin{align*}
        (\mathcal{P}_\Gamma +\mathcal{P}_\Delta )(T)(x)&=\sup_{\Phi
        \in \Fin(\Gamma )} \sum_{(\alpha ,\beta )\in \Phi }^{}P_\alpha
        TP_\beta x+\sup_{\Psi \in \Fin(\Delta )}\sum_{(\gamma ,\delta
        )\in \Psi }^{}P_\gamma TP_\delta x\\
        &\ge \sup\bigg\{\sum_{(\alpha ,\beta )\in \Phi }^{}P_\alpha
            TP_\beta x+\sum_{(\gamma ,\delta )\in \Psi }^{}P_\gamma
            TP_\delta x\biggm| \Phi \in \Fin(\Gamma ),\, \Psi \in
        \Fin(\Delta )\bigg\}\\
        &=\sup_{\Phi \in \Fin(\Gamma \cup \Delta )} \sum_{(\alpha
        ,\beta )\in \Phi }^{}P_\alpha TP_\beta x=\mathcal{P}_{\Gamma
        \cup \Delta }(T)x,
    \end{align*}
    where we have first used \cref{lem:pointwise}, then that the sum
    of the suprema is greater than the supremum of the sum, and
    finally the disjointess of $\Gamma $ and $\Delta $ to be able to
    apply again \cref{lem:pointwise}. To see the reverse inequality,
    fix first $\Phi \in \Fin(\Gamma )$ and $\Psi \in \Fin(\Delta )$,
    then we have
    \[
        \sum_{(\alpha ,\beta )\in \Phi }^{}P_\alpha TP_\beta
        x+\sum_{(\gamma ,\delta )\in \Psi }^{}P_\gamma TP_\delta x\le
        \mathcal{P}_{\Gamma \cup \Delta }(T)x,
    \]
    where we are using that $\Phi \cap \Psi =\O$. Since these finite
    sets were chosen arbitrarily, one can take first supremum on $\Phi
    \in \Fin(\Gamma )$ and then on $\Psi \in \Fin(\Delta )$ in the
    previous expression, and thus arrive, according to
    \cref{lem:pointwise}, to
    \[
    \mathcal{P}_\Gamma (T)x+\mathcal{P}_\Delta (T)x\le
    \mathcal{P}_{\Gamma \cup \Delta }(T)x.
    \]
    We conclude that, whenever $\Gamma \cap \Delta =\O$,
    $\mathcal{P}_\Gamma +\mathcal{P}_\Delta =\mathcal{P}_{\Gamma \cup
    \Delta }$. In particular, taking $\Delta =\overline{\Gamma }=\Lambda \times \Lambda
    \setminus \Gamma$, we get
    $\mathcal{P}_{\overline{\Gamma }}=\mathcal{P}_{\Lambda \times
    \Lambda }-\mathcal{P}_\Gamma=\mathbb{1}-\mathcal{P}_\Gamma $, thus giving the appropriate formula
    for the complementary.

    Next we have to check that the supremum $\mathcal{P}_\Gamma \vee
    \mathcal{P}_\Delta =\mathcal{P}_\Gamma + \mathcal{P}_\Delta
    -\mathcal{P}_{\Gamma \cap \Delta }$ is given by
    $\mathcal{P}_{\Gamma \cup \Delta }$. We have shown this formula in
    the case that the sets are disjoint. In the general case we can
    write
    \[
        \mathcal{P}_{\Gamma \cup \Delta }=\mathcal{P}_{\Gamma \cap
        \overline{\Delta }}+\mathcal{P}_{\Delta \cap \overline{\Gamma
    }} +\mathcal{P}_{\Gamma \cap \Delta },
    \]
    since $\Gamma \cup \Delta $ is the disjoint union of the sets
    $\Gamma \cap \overline{\Delta }$, $\Delta \cap \overline{\Gamma
    }$, and $\Gamma \cap \Delta $. Now using the formulae for the
    infimum and the complementary
    \begin{align*}
        \mathcal{P}_{\Gamma \cup \Delta }=\mathcal{P}_\Gamma
        (\mathbb{1}-\mathcal{P}_\Delta )+\mathcal{P}_\Delta
        (\mathbb{1}-\mathcal{P}_\Gamma
        )+\mathcal{P}_{\Gamma}\mathcal{P}_{\Delta }=\mathcal{P}_\Gamma
        +\mathcal{P}_\Delta -\mathcal{P}_{\Gamma \cap \Delta }.
    \end{align*}

    From all these computations we can conclude that $\{\,
    \mathcal{P}_\Gamma  \mid \Gamma \subseteq \Lambda \times \Lambda
    \, \} $ forms a Boolean algebra with the same order, suprema and
    infima as in $\mathfrak{P}(\Lr X)$, and also that
    the map $\Gamma \mapsto \mathcal{P}_\Gamma $ is a Boolean algebra
    homomorphism. Clearly this map is surjective, so to establish the
    result it only remains to
    check that it is injective. Let $\Gamma ,\Delta \subseteq \Lambda
    \times \Lambda $ be different subsets; without loss of generality, we
    may assume that there exists some $(\lambda ,\mu )\in \Gamma \setminus
    \Delta $. Fix nonzero elements $x_\lambda \in (B_\lambda )_+$ and $x_\mu \in
    (B_\mu )_+$. Applying Hahn--Banach we can find
    a positive functional $x^{*}\in X^{*}_+$ such that
    $x^{*}(x_\mu )=1$. Then the positive operator $T=x_\lambda \otimes
    x^{*}$ satisfies
    \[
    \mathcal{P}_\Gamma (T)(x_\mu )=\sup_{\Phi \in \Fin(\Gamma )}
    \sum_{(\alpha ,\beta )\in \Phi }^{} P_\alpha TP_\beta x_\mu
    =x_\lambda\neq 0,
    \]
    whereas
    \[
    \mathcal{P}_\Delta (T)(x_\mu )=\sup_{\Psi  \in \Fin(\Delta )}
    \sum_{(\gamma  ,\delta  )\in \Psi }^{}P_\gamma  TP_\delta x_\mu
    =0.
    \]
    Hence $\mathcal{P}_\Gamma (T)\neq \mathcal{P}_\Delta (T)$.
\end{proof}

\begin{rem}[Order continuous case]
    In the order continuous case, inner band projections do form a Boolean subalgebra of $\mathfrak{P}(\Lr X)$, as long as the band projections $\{P_\lambda\}_{\lambda\in\Lambda}$ form a decomposition of $X$. Indeed, according to \cref{rem:ordcont_inner}, for every $T\in\Lpos X$ and $x\in X_+$ we have
    \[
    \mathcal P_{\Lambda\times\Lambda}(T)x=\sum_{(\alpha,\beta)\in\Lambda\times\Lambda}P_\alpha TP_\beta x=\sum_{\alpha\in\Lambda}P_\alpha T\sum_{\beta\in\Lambda}P_\beta x=\sum_{\alpha\in\Lambda}P_\alpha Tx=Tx.
    \]
    Hence $\mathcal P_{\Lambda\times\Lambda}(T)=T$ for every $T\in\Lpos X$, so $\mathcal P_{\Lambda\times\Lambda}=I_{\Lr X}$. This, together with the previous proposition, implies that inner band projections form a Boolean subalgebra of $\mathfrak{P}(\Lr X)$.
\end{rem}

The following is a partial characterization of inner projection bands. In the order continuous case, this result can be improved to completely characterize these bands.

\begin{prop}\label{prop:band_char}
    Let $X$ be a Dedekind complete Banach lattice and let $\{P_\lambda
    \}_{\lambda \in \Lambda }$ be a family of pairwise disjoint band
    projections on $X$. Then, for any $\Gamma \subseteq \Lambda \times
    \Lambda $, the inner projection band $\mathcal{B}_\Gamma $
    satisfies
    \[
    \mathcal{B}_\Gamma \subseteq \{\, T \in \Lr X \mid T(B_\beta
    )\subseteq B(\{\, B_\alpha  \mid \alpha \in \Gamma ^{\beta
}\})\text{ for every } \beta \in \Lambda  \, \} .
    \]
\end{prop}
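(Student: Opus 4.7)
The target is to show that if $T\in\mathcal{B}_\Gamma$, then for each $\beta\in\Lambda$ the image $T(B_\beta)$ lies in the band generated by $\{B_\alpha:\alpha\in\Gamma^\beta\}$. My plan is to reduce to the case where both $T$ and the test vector are positive, use pointwise disjointness of the $P_\lambda$'s to kill all but one column of the double sum appearing in \cref{lem:pointwise}, and then use order-closedness of the target band.

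\medskip
\textbf{Step 1: Reduction to positive $T$ and positive $x$.} Since $\mathcal{B}_\Gamma$ is a projection band in $\Lr X$, it is an ideal, so $T\in\mathcal{B}_\Gamma$ implies $T_+,T_-\in\mathcal{B}_\Gamma$. Likewise, each $B_\beta$ is an ideal in $X$, so any $x\in B_\beta$ splits as $x=x_+-x_-$ with $x_\pm\in (B_\beta)_+$. Therefore it suffices to prove: if $T\in(\mathcal{B}_\Gamma)_+$, $\beta_0\in\Lambda$ and $x\in(B_{\beta_0})_+$, then $Tx$ belongs to the band $B(\{B_\alpha:\alpha\in\Gamma^{\beta_0}\})$.

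\medskip
\textbf{Step 2: Apply \cref{lem:pointwise} and use disjointness.} Since $T\in(\mathcal{B}_\Gamma)_+$ means $T=\mathcal{P}_\Gamma(T)$, \cref{lem:pointwise} gives
\[
Tx=\sup_{\Phi\in\Fin(\Gamma)}\sum_{(\alpha,\beta)\in\Phi}P_\alpha TP_\beta x.
\]
Because the family $\{P_\lambda\}_{\lambda\in\Lambda}$ is pairwise disjoint, $P_\beta P_{\beta_0}=0$ whenever $\beta\neq\beta_0$. Hence $P_\beta x=P_\beta P_{\beta_0}x=0$ for $\beta\neq\beta_0$, and only the terms with $\beta=\beta_0$ survive in the sum. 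Writing $\Phi^{\beta_0}=\{\alpha:(\alpha,\beta_0)\in\Phi\}\subseteq\Gamma^{\beta_0}$, we obtain
\[
Tx=\sup_{\Phi\in\Fin(\Gamma)}\sum_{\alpha\in\Phi^{\beta_0}}P_\alpha Tx.
\]

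\medskip
\textbf{Step 3: Conclude via order-closedness.} For each $\alpha\in\Gamma^{\beta_0}$, the vector $P_\alpha Tx$ lies in $B_\alpha$, hence in $B(\{B_\gamma:\gamma\in\Gamma^{\beta_0}\})$; since this target is a (linear) ideal, every finite partial sum $\sum_{\alpha\in\Phi^{\beta_0}}P_\alpha Tx$ also lies in it. The net of these partial sums is increasing and bounded above by $Tx$, and a band is by definition order-closed, so its supremum $Tx$ belongs to $B(\{B_\alpha:\alpha\in\Gamma^{\beta_0}\})$. Combined with Step~1, this finishes the proof.

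\medskip
\textbf{Expected obstacles.} The argument is essentially bookkeeping once \cref{lem:pointwise} is in hand; the only mild point requiring care is the pairwise disjointness interpretation ($P_\alpha\wedge P_\beta=0$ as band projections is equivalent to $P_\alpha P_\beta=0$), which is what collapses the double-indexed supremum to a single-indexed one.
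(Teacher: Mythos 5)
Your proof is correct and follows essentially the same route as the paper's: apply \cref{lem:pointwise}, use disjointness of the $P_\lambda$'s to collapse the double sum to the single column $\Gamma^{\beta_0}$, and conclude by order-closedness of the target band. The only difference is that you make explicit the reduction to positive $T$ and positive $x$, which the paper leaves implicit; that is a welcome bit of extra care, not a divergence.
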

\begin{proof}
Let $T \in \mathcal{B}_\Gamma $ and $x \in (B_\lambda )_+$ for a
certain $\lambda \in \Lambda $. Then, using \cref{lem:pointwise},
\[
Tx=\sup_{\Phi \in \Fin(\Gamma )} \sum_{(\alpha ,\beta )\in \Phi
}^{}P_\alpha TP_\beta x=\sup_{\Phi \in \Fin(\Gamma )} \sum_{\alpha
\in \Phi ^{\lambda }}^{} P_\alpha Tx.
\]
Note that for each given $\Phi \in \Fin(\Gamma )$, $\sum_{\alpha
\in \Phi ^{\lambda }}^{} P_\alpha Tx \in B(\{\, B_\alpha  \mid \alpha
\in \Gamma  ^{\lambda }\, \} )$, and since a band is closed under taking
arbitrary sumprema, the result follows.
\end{proof}
\begin{rem}
    The converse inclusion in \cref{prop:band_char} is
    not true in general. Consider the operator $T$ presented in
    \cref{rem:noninner}: we have $T(\spn\{e_n\})=0$ for
    each $n \in \N$. Hence, for any $\Gamma \subseteq \N\times \N$, $T$ satisfies
    \[
    T(\spn\{ e_\beta  \})\subseteq B(\{\, B_\alpha  \mid \alpha
    \in \Gamma ^{\beta } \, \}) \quad\text{for every }\beta \in \N.
    \]
    However, $T \not\in
    \mathcal{B}_\Gamma $ for any $\Gamma \subseteq \N\times \N$, because $\mathcal{P}_\Gamma(T)=0$.
\end{rem}

\begin{rem}[Order continuous case]\label{rem:band_char_ordcont}
    In the order continuous case, the other inclusion in \cref{prop:band_char} does hold, giving the following characterization of inner projection bands:
    \[
    \mathcal{B}_\Gamma =\bigg\{\, T \in \Lr X \biggm| T(B_\beta )\subseteq
    \bigoplus_{\alpha \in \Gamma ^{\beta }} B_\alpha \text{ for every }\beta
\in \Lambda   \, \bigg\}.
    \]
    Let us check the converse inclusion to \cref{prop:band_char}. Let $T\in \Lr X$ be such that $T(B_\beta )\subseteq
    \bigoplus_{\alpha \in \Gamma ^{\beta }}B_\alpha $ for every $\beta
    \in \Lambda  $. Then for any given $x \in X$,
    \[
    \mathcal{P}_\Gamma (T)x=\sum_{(\alpha ,\beta )\in \Gamma }^{}
    P_\alpha TP_\beta x=\sum_{\beta \in \pi _2(\Gamma )}^{} \bigg(
        \sum_{\alpha \in \Gamma ^{\beta }}^{} P_\alpha \bigg)TP_\beta
        x=\sum_{\beta \in \pi _2(\Gamma )}^{}TP_\beta x,
    \]
    because $P_\beta x \in B_\beta $ and by assumption $TP_\beta x \in
    \bigoplus_{\alpha \in \Gamma ^{\beta }}B_\alpha $. Now using that
    $TP_\beta x=0$ whenever $\beta \not\in \pi _2(\Gamma )$, we get
    \[
    \mathcal{P}_\Gamma (T)(x)=\sum_{\beta \in \pi _2(\Gamma )}^{}
    TP_\beta x=\sum_{\beta \in \Lambda }^{} TP_\beta
    x=T\bigg(\sum_{\beta \in \Lambda }^{} P_\beta x\bigg)=Tx.
    \]
    Hence $T=\mathcal{P}_\Gamma (T) \in \mathcal{B}_\Gamma $, as
    wanted.
\end{rem}

\begin{ex}
    Let us illustrate how one can use previous remark in a particular example.
        Let $1\le p<\infty $, let $m$ denote the Lebesgue measure on
    $[0,1]$, and let $X=\Lp m$. For a fixed $n \in \N$, consider the
    dyadic partition of the unit interval $[0,1]=\bigcup_{k=1} ^{n} I_k^{n}$, where $I_k^{n}=[(k-1)/2^{n},k/2^{n}]$ for $k=1,\ldots ,2^{n}$. Consider the band projections
    \[
    \begin{array}{cccc}
    P_k\colon& X & \longrightarrow & X \\
            & f & \longmapsto & f \chi _{I_k} \\
    \end{array},
    \]
    which have associated bands
    \[
    B_k=\range(P_k)=\{\, f \in X \mid \supp(f)\subseteq I_k \, \} .
    \]
   Clearly, $X=\bigoplus_{k=1}^{2^{n}}B_k$. Then, according to
    \cref{rem:ordcont_inner}, for any $\Gamma \subseteq \{1,\ldots
    ,2^{n}\}\times \{1,\ldots ,2^{n}\}$ the operator
    \[
    \begin{array}{cccc}
    \mathcal{P}_\Gamma \colon& \Lr X & \longrightarrow & \Lr X \\
            & T & \longmapsto & \displaystyle\sum_{(\alpha ,\beta )\in \Gamma }^{}
            P _{\alpha } T P _{\beta }\\
    \end{array}
    \]
    is a band projection.

    Set $n=2$ and $\Gamma =\{(1,1),(2,1),(3,1),(1,2)\}$. Then, according to \cref{rem:band_char_ordcont},
    $T \in \mathcal{B}_\Gamma $
    if and only if the following three conditions are satisfied:
    \begin{enumerate}
        \item $T(B_1)\subseteq B_1\oplus B_2\oplus B_3$, i.e.,
            $\supp(Tf)\subseteq [0,3/4]$ whenever $\supp(f)\subseteq
            [0,1/4]$;
        \item $T(B_2)\subseteq B_1$, i.e.,
            $\supp(Tf)\subseteq [0,1/4]$ whenever $\supp(f)\subseteq
            [1/4,1/2]$;
        \item $T(B_3)=T( B_4)=0$, i.e.,
            $Tf=0$ whenever $\supp(f)\subseteq
            [1/2,1]$.
    \end{enumerate}
    For example, the positive operator $T\colon \Lr X \to \Lr X$ given
    by
    \[
        (Tf)(x)=f(x/2) \chi _{[0,1/2]}(x)\quad\text{for every } f \in \Lr
        X\text{ and } x \in [0,1],
    \]
    satisfies the previous conditions, so $T \in \mathcal{B}_\Gamma $.

\end{ex}

For the next property we need first a definition.

\begin{defi}
    Let $X$ be a Banach space and
    let $T,S\colon X\rightarrow X$ be two operators. We say that $T$ is \emph{left
    orthogonal} to $S$ or that $S$ is \emph{right orthogonal} to $T$
    if $TS=0$. Given two families of operators
    $\mathcal{F}$ and $\mathcal{G}$ from $X$ to $X$, we say that $\mathcal{F}$
    is \emph{left orthogonal} to $\mathcal{G}$ and that $\mathcal{G}$
    is \emph{right orthogonal} to $\mathcal{F}$ if $TS=0$ for any $T
    \in \mathcal{F}$ and $S \in \mathcal{G}$.
\end{defi}

\begin{prop}
    Let $X$ be a Dedekind complete Banach lattice and let $\{P_\lambda
    \}_{\lambda \in \Lambda }$ be a family of pairwise disjoint band
    projections on $X$.
    Let $\Gamma
    ,\Delta \subseteq \Lambda \times \Lambda $. Then
    $\mathcal{B}_\Gamma  $ is left orthogonal to $\mathcal{B}_\Delta $
    if and only if $\pi _2(\Gamma )\cap \pi _1(\Delta )=\varnothing$.
\end{prop}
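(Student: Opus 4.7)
The plan is to prove the two implications separately. The forward direction (disjointness of $\pi_2(\Gamma)$ and $\pi_1(\Delta)$ implies left orthogonality) will follow from the pointwise supremum formula in \cref{lem:pointwise} combined with order continuity of band projections. The converse is established by exhibiting concrete rank-one operators whose composition does not vanish.

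For the forward direction, since $\mathcal{B}_\Gamma$ and $\mathcal{B}_\Delta$ are ideals of $\Lr X$ spanned by their positive parts, it suffices to prove $TS=0$ when both $T\in\mathcal{B}_\Gamma$ and $S\in\mathcal{B}_\Delta$ are positive. Fix such $T,S$ and $x\in X_+$. Since $T=\mathcal{P}_\Gamma(T)$, \cref{lem:pointwise} expresses $T(Sx)$ as a supremum over $\Phi\in\Fin(\Gamma)$ of the finite sums $\sum_{(\alpha,\beta)\in\Phi} P_\alpha T P_\beta(Sx)$, so the goal reduces to showing $P_\beta Sx=0$ whenever $\beta\in\pi_2(\Gamma)$. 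Applying \cref{lem:pointwise} to $S=\mathcal{P}_\Delta(S)$ and then pushing the order continuous band projection $P_\beta$ through the supremum and inside the finite sum, pairwise disjointness $P_\beta P_\gamma=0$ for $\gamma\neq\beta$ collapses the expression to a supremum over $\delta$ with $(\beta,\delta)\in\Delta$. Since $\pi_2(\Gamma)\cap\pi_1(\Delta)=\varnothing$ forbids any such $\delta$, we conclude $P_\beta Sx=0$, and hence $TSx=0$.

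For the converse, I argue by contrapositive: given $\mu\in\pi_2(\Gamma)\cap\pi_1(\Delta)$, pick $\alpha\in\Gamma^\mu$ and $\nu\in\Delta_\mu$, together with nonzero positive elements $x_\alpha\in B_\alpha$, $x_\mu\in B_\mu$, $x_\nu\in B_\nu$ and positive functionals $u^*,v^*\in X^*_+$ satisfying $u^*(x_\mu)=v^*(x_\nu)=1$ (available by the positive Hahn--Banach theorem, as in the proof of \cref{prop:boolean_alg}). The rank-one positive operators
\[
T=x_\alpha\otimes(u^*\!\circ\! P_\mu),\qquad S=x_\mu\otimes(v^*\!\circ\! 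P_\nu)
\]
are fixed by $\mathcal{P}_\Gamma$ and $\mathcal{P}_\Delta$ respectively, since by pairwise disjointness of $\{P_\lambda\}$ the only nonzero block in the formula from \cref{lem:pointwise} for $\mathcal{P}_\Gamma(T)$ is indexed by $(\alpha,\mu)\in\Gamma$, and analogously for $S$; hence $T\in\mathcal{B}_\Gamma$ and $S\in\mathcal{B}_\Delta$. A direct computation yields $TSx_\nu=T(x_\mu)=x_\alpha\neq 0$, contradicting left orthogonality.

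The main technical subtlety lies in the forward direction: justifying that $P_\beta$ commutes with the supremum defining $\mathcal{P}_\Delta(S)x$. This is precisely what order continuity of band projections buys us, and without it the collapse to the empty index set $\{\delta:(\beta,\delta)\in\Delta\}$ could not be carried out term by term.
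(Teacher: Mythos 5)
Your proof is correct and follows essentially the same route as the paper's: the forward direction reduces to positive $T,S$ and uses \cref{lem:pointwise} together with pairwise disjointness to show $P_\beta Sx=0$ for $\beta\in\pi_2(\Gamma)$ (the paper phrases this as $Sx$ lying in the band generated by $\{B_\gamma:\gamma\in\pi_1(\Delta)\}$, while you push the order continuous $P_\beta$ through the supremum --- equivalent justifications), and the converse exhibits the same rank-one witnesses, with your pre-composition of the functionals with $P_\mu$, $P_\nu$ being a slightly cleaner way of ensuring the operators are already fixed by $\mathcal{P}_\Gamma$ and $\mathcal{P}_\Delta$ than the paper's direct computation of $\mathcal{P}_\Gamma(T)$ and $\mathcal{P}_\Delta(S)$.
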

\begin{proof}
    Assume $\pi _2(\Gamma )\cap \pi _1(\Delta )=\varnothing$.
    Let $T \in \mathcal{B}_\Gamma $ and $S \in \mathcal{B}_\Delta $,
    we first consider the case $T,S\ge 0$. According to
    \cref{lem:pointwise}, for any $x \in X_+$:
    \[
    Sx=\sup_{\Psi \in \Fin(\Delta )} \sum_{(\gamma ,\delta )\in \Psi
    }^{} P_\gamma SP_\delta x \in B(\{\, B_\gamma  \mid \gamma \in \pi
    _1(\Delta )\, \} ),
    \]
    and since the projection bands $\{P_\lambda \}_{\lambda \in
    \Lambda }$ are pairwise disjoint
    \[
    P_\beta Sx=0\quad\text{for every }\beta \in \pi _2(\Gamma ).
    \]
    But then
    \[
    T(Sx)=\mathcal{P}_\Gamma (T)(Sx)=\sup_{\Phi \in \Fin(\Gamma )} \sum_{(\alpha ,\beta )\in \Phi
    }^{}P_\alpha TP_\beta Sx=0,
    \]
    and since $x \in X_+$ was arbitrary, it follows that $TS=0$. In
    the general case, when $T$ and $S$ are not necessarily positive,
    decomposing into positive and negative parts we get
    \[
    TS=T_+S_+ - T_-S_+ -T_+S_- +T_-S_-=0,
    \]
    because $T_+,T_- \in \mathcal{B}_\Gamma $ and $S_+,S_- \in
    \mathcal{B}_\Delta $ are all positive.

    For the reverse implication, assume $\pi _2(\Gamma )\cap \pi
    _1(\Delta )\neq \varnothing$, and let us show that
    $\mathcal{B}_\Gamma $ is not left orthogonal to
    $\mathcal{B}_\Delta $. For this we will proceed almost like in the
    order continuous case.
    Let $\gamma \in \pi _2(\Gamma )\cap \pi
    _1(\Delta )$, so that there exist $\alpha ,\beta \in \Lambda $
    with $(\alpha ,\gamma )\in \Gamma $ and $(\gamma ,\beta )\in
    \Delta $. Take nonzero elements $x_\alpha \in (B_\alpha )_+$,
    $x_\gamma \in (B_\gamma)_+ $, and $x_\beta \in (B_\beta )_+$.
    As a consequence of the classical Hahn--Banach theorem, there exist functionals $f,g \in X^{*}$ such that $f(x_\gamma
    )=1$ while $f(y)=0$ for every $y \not\in \spn\{x_\gamma \}$, and
    similarly $g(x_\beta )=1$ while $g(y)=0$ for every $y \not\in
    \spn\{x_\beta \}$.

    Define the operators $T=f\otimes x_\alpha $
    and $S=g\otimes x_\gamma $.
    Being rank one operators, $T$ and $S$ are regular. Using \cref{lem:pointwise}:
    \begin{align*}
        \mathcal{P}_\Delta (S)x_\beta &=\sup_{\Psi \in \Fin(\Delta )}
        \sum_{(\lambda ,\mu )\in \Psi }^{} P_\lambda SP_\mu x_\beta \\
                                      &=\sup_{\Psi \in \Fin(\Delta )}
        \sum_{(\lambda ,\mu )\in \Psi }^{} P_\lambda (x_\gamma
        )g(P_\mu x_\beta )\\
                                      &=\sup_{\Psi \in \Fin(\Delta )}
        \sum_{\mu \in \Psi _\gamma }^{} x_\gamma
        g(P_\mu x_\beta )\\
                                      &=\sup_{\substack{\Psi \in \Fin(\Delta )\\ (\gamma ,\beta )\in
                                      \Psi}} x_\gamma =x_\gamma
    \end{align*}
    because by assumption $(\gamma ,\beta )\in \Delta $. Repeating the
    same computations
    \[
    \mathcal{P}_\Gamma (T)x_\gamma =\sup_{\Phi \in \Fin(\Gamma )}
    \sum_{(\lambda ,\mu )\in \Phi }^{}P_\lambda (x_\alpha )f(P_\mu x_\gamma
    )=x_\alpha,
    \]
    because $(\alpha ,\gamma )\in \Gamma $. In conclusion,
    \[
    \mathcal{P}_\Gamma (T)\mathcal{P}_\Delta (S)x_\beta =x_\alpha \neq
    0,
    \]
    so $\mathcal{B}_\Gamma $ is not left orthogonal to
    $\mathcal{B}_\Delta $.
\end{proof}

\section{When are all band projections on $\Lr X$ inner?}\label{sec:allinner}
In the previous section, we have constructed a particular family of band projections on $\Lr X$, called inner band projections. It seems natural to try to characterize the Dedekind complete Banach lattices $X$ for which all band projections on $\Lr X$ are inner. This is achieved in the following theorem.

\begin{thm}\label{thm:allinner}
    Let $X$ be a Dedekind complete Banach lattice. All band projections on $\Lr X$ are inner if and only if $X$ is both atomic and order continuous.
\end{thm}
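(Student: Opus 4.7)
The plan is to prove each implication separately. The easy direction atomic + order continuous $\Rightarrow$ all band projections inner is a direct construction from the atom decomposition, while the converse requires handling atomicity and order continuity as two independent obstructions.

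For ($\Leftarrow$), let $\{e_\lambda\}_{\lambda\in\Lambda}$ be the atoms of $X$ with atom band projections $\{P_\lambda\}$. By \cref{rem:ordcont_inner} and the remark following \cref{prop:boolean_alg}, $\{P_\lambda\}$ forms a decomposition of $X$ and $\mathcal{P}_{\Lambda\times\Lambda}=I_{\Lr X}$, so the inner band projections from this family form a Boolean subalgebra of $\mathfrak{P}(\Lr X)$ with top $I$. The crucial observation is that each rank-one operator $T_{\alpha\beta}=e_\beta^*\otimes e_\alpha$ is an atom of $\Lr X$: an inequality $|S|\le c\,T_{\alpha\beta}$ forces $S$ to be supported only at position $(\alpha,\beta)$ in the matrix representation, so $S\in\R\,T_{\alpha\beta}$. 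Given any band projection $\mathcal{P}$ on $\Lr X$, it must send each $T_{\alpha\beta}$ to $0$ or to itself, and setting $\Gamma=\{(\alpha,\beta):\mathcal{P}(T_{\alpha\beta})=T_{\alpha\beta}\}$ I would verify $\mathcal{P}=\mathcal{P}_\Gamma$ as follows. For positive $T\in\Lr X$ with matrix $a_{\alpha\beta}=e_\alpha^*(Te_\beta)$, I would show $T=\sup_F\sum_{(\alpha,\beta)\in F}a_{\alpha\beta}T_{\alpha\beta}$ in the order of $\Lr X$: any upper bound $S$ satisfies $Se_\beta\ge Te_\beta$ on each atom, and because $X$ is order continuous both $S$ and $T$ are order continuous, extending this to all of $X_+$. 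Applying order continuity of band projections to this supremum and using \cref{rem:ordcont_inner} to evaluate $\mathcal{P}_\Gamma(T)$ then yields $\mathcal{P}(T)=\mathcal{P}_\Gamma(T)$ for positive $T$, and by linearity everywhere.

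For ($\Rightarrow$), I argue by contrapositive in two steps. For atomicity, suppose $X$ has a nonzero atomless band $B$. The principal band of $P_B$ in $\Lr X$ coincides with $Z(B)$, the center of $B$ extended by zero to $X$. If the projection $\mathcal{Q}$ onto this band equals some $\mathcal{P}_\Gamma$, then $\mathcal{P}_\Gamma(P_B)=P_B$ yields $\bigvee_{\alpha\in\Gamma^*}P_\alpha P_B=P_B$, where $\Gamma^*=\{\alpha:(\alpha,\alpha)\in\Gamma\}$. Hence some $P_{\alpha_0}P_B\ne 0$ defines a band projection onto a nonzero atomless sub-band of $B$. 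I would then pick disjoint nonzero positive $u,v$ in this sub-band and a positive functional $v^*\in X^*$ supported on $B_{P_{\alpha_0}}$ with $v^*(v)>0$; the rank-one positive operator $S=v^*\otimes u$ then satisfies $P_\alpha SP_\beta=0$ unless $(\alpha,\beta)=(\alpha_0,\alpha_0)$, so $\mathcal{P}_\Gamma(S)=S\in Z(B)$. But any nonzero element of $Z(B)$ for atomless $B$ is a multiplication operator with infinite-dimensional range, contradicting the rank-one nature of $S$.

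For order continuity, suppose $X$ is atomic but not order continuous; then there is a nonzero positive singular functional $\phi\in X^*$, and because the order continuous dual is the closed band generated by the biorthogonal atoms $\{e_\lambda^*\}$, singularity forces $\phi(e_\lambda)=0$ for every atom. For $u\in X_+\setminus\{0\}$, the operator $T=\phi\otimes u$ is nonzero yet satisfies $P_\alpha TP_\beta=0$ for all atom projections, since $\phi(P_\beta x)=x(\beta)\phi(e_\beta)=0$; hence $\mathcal{P}_{\Lambda\times\Lambda}$ from the atom decomposition annihilates $T$, and in particular is not the identity. This does not immediately produce a non-inner band projection, because coarser decompositions $\{Q_\mu\}$ may well satisfy $\mathcal{P}_\Gamma(T)=T$ (for example, $\{I_X\}$ trivially realizes every operator). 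The main obstacle is therefore identifying a band projection that no inner construction can reproduce. My approach would be a cardinality argument: exploit the abundance of singular functionals in an atomic non-order-continuous $X$ to produce a family $\{\phi_i\}_{i\in I}$ of pairwise disjoint positive singular functionals with $|I|$ strictly exceeding the cardinality of all inner band projections (parametrized by pairwise disjoint families of band projections on $X$ together with subsets of the squared index set, giving at most $2^{|\Lambda|^2}$ possibilities). The pairwise disjoint associated operators $T_i=\phi_i\otimes u$ then yield pairwise disjoint band projections generating a Boolean subalgebra of $\mathfrak{P}(\Lr X)$ of size $2^{|I|}$, which cannot all be inner. Producing the required family $\{\phi_i\}$ in every atomic non-order-continuous $X$ and making the cardinality count work in the general case (rather than merely for $\ell_\infty$) is the delicate technical point of this step.
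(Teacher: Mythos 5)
Your left-to-right direction (atomic and order continuous implies every band projection is inner) is essentially the paper's \cref{lem:atomic_ordcont}: the elementary operators $\lambda_b\otimes a$ are atoms of $\Lr X$, any band projection fixes or annihilates each one, and $\Gamma$ is read off from that; your finish, writing $T=\sup_{F}\sum_{(\alpha,\beta)\in F}P_\alpha TP_\beta$ and using that band projections on $\Lr X$ are order continuous, is a valid variant of the paper's contradiction argument. Your treatment of non-atomicity is genuinely different: where the paper represents an atomless principal band as an ideal of $L_1(\mu)$ over a non-atomic measure space and shows the integration operator is disjoint from the identity, you test the band generated by $P_B$ (for $B$ a nonzero atomless band) against a rank-one operator concentrated in a single $B_{\alpha_0}\cap B$. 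This is arguably more elementary and, unlike the paper's \cref{lem:not_atomic}, needs no order continuity hypothesis; but the step ``a nonzero element of the band generated by $P_B$ cannot have finite-dimensional range when $B$ is atomless'' is asserted rather than proved. What you actually need is weaker and provable directly: elements of that band are band preserving on $B$, so if $S=v^*\otimes u$ with $u\wedge v=0$ and $v^*(v)>0$ belonged to it, then $Sv=v^*(v)u$ would lie in $B_v\cap B_v^{\perp}=\{0\}$, a contradiction. With that supplied, this part is sound.

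The genuine gap is the atomic, non-order-continuous case. You correctly identify the obstruction (a positive singular functional $\phi$ kills all atoms, so $T=\phi\otimes u$ is annihilated by $\mathcal P_{\Lambda\times\Lambda}$ for the atomic decomposition) and correctly note that this proves nothing by itself, since an inner band projection may be built from \emph{any} pairwise disjoint family of band projections, e.g.\ $\{I_X\}$. But the proposed repair --- a cardinality count showing there are more pairwise disjoint positive singular functionals than inner band projections --- is not carried out, and it is doubtful it works in general: for an atomic $X$ with atom set $A$ one has at most $2^{|A|}$ inner band projections, so you would need strictly more than $2^{|A|}$ pairwise disjoint nonzero positive singular functionals in an \emph{arbitrary} atomic non-order-continuous Dedekind complete $X$, together with cardinal arithmetic that does not depend on GCH; this is plausible for $\ell_\infty$ but not established (nor clearly true) in general. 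The gap can be closed without any counting, by the device the paper uses at the end of \cref{lem:not_ordcont}: show that the principal band $\mathcal B_T$ of $T=\phi\otimes u$ is not inner with respect to \emph{any} family $\{Q_\xi\}_{\xi\in\Xi}$. Indeed, if $\mathcal P_\Delta$ were the projection onto $\mathcal B_T$, pick $(\gamma_0,\delta_0)\in\Delta$ with $C_{\gamma_0},C_{\delta_0}\neq\{0\}$, an atom $e_b\in C_{\delta_0}$ (atomicity), and $0<v\in C_{\gamma_0}$; then $S=\lambda_b\otimes v$ satisfies $S\wedge T=0$ because $\lambda_b\wedge\phi=0$ ($\lambda_b$ is order continuous, $\phi$ is singular), yet $\mathcal P_\Delta(S)\geq Q_{\gamma_0}SQ_{\delta_0}\neq0$, contradicting $S\in\mathcal B_T^{d}$. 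Without an argument of this kind, quantifying over all possible inner representations, your right-to-left direction is incomplete.
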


The goal of this section is to prove \cref{thm:allinner}. We do this by splitting the result in several parts: first we show that, if $X$ is not order continuous, there exists a band projection on $\Lr X$ that is not inner; then we show that, if $X$ is order continuous but not atomic, there is also a band projection on $\Lr X$ that is not inner; finally, we prove that if $X$ is atomic and order continuous, then all band projections on $\Lr X$ are inner.

Let us start by recalling a well-known separation result on Banach lattices that will be central in our proofs.

\begin{prop}\label{prop:separation}
    Let $X$ be a Banach lattice and let $I$ be a closed ideal. If $x_0
    \not\in I$, $x_0\ge 0$, there exists a positive functional $x^{*}\in
    X^{*}_+$ such that $x^{*}|_I=0$ and $x^{*}(x_0)=1$.
\end{prop}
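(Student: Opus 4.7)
The natural approach is to pass to the quotient Banach lattice $X/I$. Since $I$ is a closed (lattice) ideal, $X/I$ inherits a Banach lattice structure with positive cone $\pi(X_+)$, where $\pi\colon X\to X/I$ denotes the quotient map; this is a standard fact available in the references cited in the introduction. The hypotheses translate to $\pi(x_0)\in (X/I)_+\setminus\{0\}$: positivity is inherited from $x_0\ge 0$, and nonvanishing is exactly the statement $x_0\notin I$. Any positive functional $\phi\in (X/I)^*_+$ with $\phi(\pi(x_0))=1$ will then pull back to $x^*:=\phi\circ\pi\in X^*_+$, which automatically vanishes on $I$ and satisfies $x^*(x_0)=1$. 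So the proof reduces to producing $\phi$.

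To build $\phi$, I would apply the geometric Hahn--Banach theorem to separate the compact singleton $\{\pi(x_0)\}$ from the closed convex cone $-(X/I)_+$. These are disjoint: if $\pi(x_0)\in -(X/I)_+$, then $\pi(x_0)$ would be simultaneously positive and negative, forcing $\pi(x_0)=0$, contrary to $x_0\notin I$. Strict separation then supplies $\phi\in (X/I)^*$ and $\alpha\in\R$ with $\phi(y)<\alpha<\phi(\pi(x_0))$ for every $y\in -(X/I)_+$. Because $-(X/I)_+$ is closed under nonnegative scaling, $\phi$ must be nonpositive on it (otherwise $\phi$ would be unbounded above there), which is equivalent to $\phi\ge 0$ on $(X/I)_+$. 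Putting $y=0$ also yields $\alpha\ge 0$, so $\phi(\pi(x_0))>0$, and a final rescaling makes $\phi(\pi(x_0))=1$.

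I do not foresee a real obstacle: the argument is essentially a textbook application of Hahn--Banach. The only nontrivial input is the quotient Banach lattice construction $X/I$ for a closed ideal $I$, which is invoked as a black box; beyond that, the proof is just the standard observation that a continuous linear functional bounded above on a cone must be nonpositive on it, combined with the strict separation of a point from a closed convex set.
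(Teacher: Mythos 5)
Your argument is correct, but note that the paper itself gives no proof of this proposition: it is stated as ``a well-known separation result'' and used as a black box, so there is nothing to match it against. Your quotient route is a perfectly standard way to establish it: $X/I$ is a Banach lattice because $I$ is a closed ideal, $\pi(x_0)$ is a nonzero positive element, and strict separation of $\{\pi(x_0)\}$ from the closed cone $-(X/I)_+$ produces a positive $\phi$ with $\phi(\pi(x_0))>0$, which pulls back as required. (Two small points: the closedness of $(X/I)_+$, which you need for the separation step, is automatic once you know $X/I$ is a Banach lattice, since positive cones of Banach lattices are closed; and taking $y=0$ actually gives the strict inequality $\alpha>0$, not just $\alpha\ge 0$.) The more classical proof, which avoids quotients, is to take by Hahn--Banach a functional $f\in X^*$ with $f|_I=0$ and $f(x_0)>0$ (using $\operatorname{dist}(x_0,I)>0$) and then pass to the modulus: for $x\in I_+$ one has $|f|(x)=\sup\{f(y):|y|\le x\}=0$ because $I$ is an ideal, while $|f|(x_0)\ge f(x_0)>0$, so $x^*=|f|/|f|(x_0)$ works. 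That version trades the quotient construction for the Riesz--Kantorovich formula for $|f|$; both are textbook arguments and either would serve the paper's purposes.
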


The following procedure to construct band projections on the space of regular operators will also be very useful.

\begin{thm}
    Let $T\colon X\to
    Y$ be a positive operator between Banach lattices, with $Y$ Dedekind
    complete. For every ideal $I\subseteq X$ the formula
    \[
    T_Ix=\sup \{\, Ty \mid 0\le y\le x \text{ and } y\in I \, \} ,\quad
    \text{for } x \in X_+,
    \]
    defines a positive operator from $X$ to $Y$. Moreover, the extension of the map
    \[
    \begin{array}{cccc}
        \mathcal{P}_I\colon& \Lpos{X,Y} & \longrightarrow & \Lpos{X,Y} \\
            & T & \longmapsto & T_I \\
    \end{array}
    \]
    to the regular operators is a band projection.
\end{thm}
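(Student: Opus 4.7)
The plan is to verify that $T_I$ as given by the formula is a positive linear operator on $X$, and then to check that the induced map $\mathcal{P}_I$ satisfies $0\le\mathcal{P}_I\le I$ and $\mathcal{P}_I^2=\mathcal{P}_I$, which by the characterization $P^2=P$, $0\le P\le I$ suffices to conclude it is a band projection.

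First, I would check that $T_Ix$ is well defined for $x\in X_+$. The set $\{Ty:0\le y\le x,\ y\in I\}$ is nonempty (it contains $T(0)=0$) and bounded above by $Tx$ since $T\ge 0$, so the supremum exists in $Y$ by Dedekind completeness. It is clear that $0\le T_Ix\le Tx$, so once $T_I$ is shown linear on $X_+$, it extends to a positive operator $T_I\colon X\to Y$ with $0\le T_I\le T$. Homogeneity $T_I(\lambda x)=\lambda T_Ix$ for $\lambda\ge 0$ is immediate from the definition. For additivity on $X_+$, take $x_1,x_2\in X_+$: the inequality $T_I(x_1)+T_I(x_2)\le T_I(x_1+x_2)$ follows by summing: for $y_i\in I$ with $0\le y_i\le x_i$, one has $y_1+y_2\in I$ and $0\le y_1+y_2\le x_1+x_2$, hence $Ty_1+Ty_2=T(y_1+y_2)\le T_I(x_1+x_2)$, and one takes suprema. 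Conversely, for any $y\in I$ with $0\le y\le x_1+x_2$, the Riesz decomposition property provides $0\le y_i\le x_i$ with $y=y_1+y_2$; since $0\le y_i\le y\in I$, the ideal property gives $y_i\in I$, so $Ty=Ty_1+Ty_2\le T_I(x_1)+T_I(x_2)$, and taking the supremum in $y$ yields the reverse inequality.

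Next I would extend $T\mapsto T_I$ additively to $\mathcal{L}_+(X,Y)$ and check it is additive there, which allows us to extend it to a positive map $\mathcal{P}_I\colon\mathcal{L}_r(X,Y)\to\mathcal{L}_r(X,Y)$. For $S,T\ge 0$ and $x\in X_+$, one inequality $(S_I+T_I)(x)\le(S+T)_I(x)$ is immediate from the definitions. For the reverse, given $y_1,y_2\in I$ with $0\le y_i\le x$, set $y=y_1\vee y_2$; since $I$ is an ideal and $y\le y_1+y_2\in I$ we have $y\in I$ with $0\le y\le x$, and positivity of $S,T$ gives $Sy_1+Ty_2\le Sy+Ty=(S+T)y\le(S+T)_I(x)$; taking suprema over $y_1,y_2$ separately yields $S_I(x)+T_I(x)\le(S+T)_I(x)$. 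The pointwise bound $0\le T_I\le T$ for $T\ge 0$ transfers to $0\le\mathcal{P}_I\le I_{\mathcal{L}_r(X,Y)}$.

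It remains to verify idempotency $\mathcal{P}_I^2=\mathcal{P}_I$. For $T\ge 0$ and $x\in X_+$,
\[
(T_I)_I(x)=\sup\{T_Iy:0\le y\le x,\ y\in I\}=\sup\{Tz:0\le z\le y\le x,\ y,z\in I\}.
\]
Since any $z\in I$ with $0\le z\le x$ is itself admissible (take $y=z$), this double supremum equals $\sup\{Tz:0\le z\le x,\ z\in I\}=T_Ix$. Hence $\mathcal{P}_I^2$ and $\mathcal{P}_I$ agree on $\mathcal{L}_+(X,Y)$, and so on $\mathcal{L}_r(X,Y)$ by linearity. Together with $0\le\mathcal{P}_I\le I_{\mathcal{L}_r(X,Y)}$, the criterion $P^2=P$, $0\le P\le I$ for band projections concludes the proof.

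The only genuinely delicate point is the additivity of $T_I$ on $X_+$, where the Riesz decomposition property combined with the ideal property of $I$ is essential; the rest consists of routine manipulations with suprema and the standard Kantorovich-type extension of additive positive maps from the cone to the whole space of regular operators.
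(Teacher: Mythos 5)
Your argument is correct and complete; note that the paper does not actually prove this statement but simply cites \cite[Theorems 1.28 \& 2.7]{AB2006}, and what you have written is essentially a self-contained reconstruction of those standard proofs (Riesz decomposition plus the ideal property for additivity of $T_I$ on $X_+$, the $y_1\vee y_2$ trick for additivity of $T\mapsto T_I$ on the positive cone, the Kantorovich extension, and the criterion $P^2=P$, $0\le P\le I$). One small slip: in the additivity of $\mathcal{P}_I$ you label $(S_I+T_I)(x)\le (S+T)_I(x)$ as the ``immediate'' inequality and then prove the same inequality as the ``reverse''; the inequality that is actually immediate is $(S+T)_I(x)\le S_I(x)+T_I(x)$, obtained by bounding $(S+T)y=Sy+Ty\le S_I(x)+T_I(x)$ for each admissible $y$ and taking the supremum. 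Both inequalities are in substance established, so this is only a mislabeling, not a gap.
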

\begin{proof}
    See \cite[Theorems 1.28 \& 2.7]{AB2006}.
\end{proof}

Let us prove first that, when the underlying Banach lattice $X$ is not order continuous, there are band projections on $\Lr X$ that are not inner:

\begin{lem}\label{lem:not_ordcont}
    Let $X$ be a Dedekind complete Banach lattice. If $X$ is not order
    continuous, there exists a band
    projection on $\Lr X$ that is not inner.
\end{lem}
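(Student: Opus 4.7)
The plan is to construct a specific band projection on $\Lr X$ that fails to be inner. Since $X$ is Dedekind complete but not order continuous, there exists a norm-closed ideal $I \subseteq X$ that is not a band; this follows from the standard characterization of order continuity (e.g.\ take a disjoint sequence $(x_n)$ in some order interval $[0,u]$ with $\|x_n\|\ge 1$ and let $I$ be the closed ideal generated by $\{x_n\}$: then $\sup_n x_n \notin I$ because the partial sums do not converge in norm). Let $B$ denote the band of $X$ generated by $I$, so $I \subsetneq B$. By the theorem preceding this lemma, applied with the ideal $I$, the prescription $\mathcal{P}_I(T)(x) = \sup\{Ty : 0 \le y \le x,\, y \in I\}$ extends to a band projection $\mathcal{P}_I$ on $\Lr X$, and I claim $\mathcal{P}_I$ is not inner.

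Suppose for contradiction that $\mathcal{P}_I = \mathcal{P}_\Gamma$ for some family $\{P_\lambda\}_{\lambda \in \Lambda}$ of pairwise disjoint band projections on $X$ and some $\Gamma \subseteq \Lambda \times \Lambda$. Testing on $I_X$ gives $\mathcal{P}_I(I_X) = P_B$ while $\mathcal{P}_\Gamma(I_X) = \bigvee_{(\alpha,\alpha) \in \Gamma} P_\alpha$, so $\bigvee_{\alpha \in A} P_\alpha = P_B$, where $A := \{\alpha \in \Lambda : (\alpha, \alpha) \in \Gamma\}$. Next, applying \cref{prop:separation}, I fix $x_0 \in B_+ \setminus I$ and choose $x^* \in X^*_+$ with $x^*|_I = 0$ and $x^*(x_0) = 1$. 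The positive rank-one operator $T = x^* \otimes x_0$ satisfies $\mathcal{P}_I(T) = 0$ directly, so also $\mathcal{P}_\Gamma(T) = 0$. Expanding via \cref{lem:pointwise},
\[
0 = \mathcal{P}_\Gamma(T)(z) = \sup_{\Phi \in \Fin(\Gamma)} \sum_{(\alpha,\beta) \in \Phi} x^*(P_\beta z)\, P_\alpha x_0 \quad \text{for every } z \in X_+,
\]
and positivity of each summand forces $x^*(P_\beta z)\,P_\alpha x_0 = 0$ for every $(\alpha, \beta) \in \Gamma$ and $z \in X_+$; equivalently, for each $(\alpha, \beta) \in \Gamma$, either $x^* \circ P_\beta = 0$ or $P_\alpha x_0 = 0$.

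Since $x_0 \in B_+$ and $\bigvee_{\alpha \in A} P_\alpha = P_B$, we have $x_0 = \bigvee_{\alpha \in A} P_\alpha x_0$. For each $\alpha \in A$ with $P_\alpha x_0 > 0$, applying the dichotomy above to the pair $(\alpha, \alpha) \in \Gamma$ forces $x^* \circ P_\alpha = 0$, and in particular $x^*(P_\alpha x_0) = 0$. The main obstacle will be to turn this information into a contradiction with $x^*(x_0) = 1$: since $x^*$ is a priori not order continuous, the identity $x^*(x_0) = \sup_{F} x^*\bigl(\sum_{\alpha \in F} P_\alpha x_0\bigr) = 0$ is not automatic. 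My expectation is that this is resolved by a more careful choice of $I$ and $x_0$ --- for instance, by taking $x_0$ to be the supremum of an explicit increasing sequence $(y_n) \subseteq I_+$ and exploiting that ideals are invariant under band projections (so each $P_\alpha y_n \in I$, yielding $x^*(P_\alpha y_n) = 0$) --- so that the interplay between the non-order-closure of $I$ and the disjoint decomposition $(P_\alpha x_0)_{\alpha \in A}$ forces the desired contradiction.
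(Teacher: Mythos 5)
Your setup coincides with the paper's up to the point where you derive, from the assumption $\mathcal{P}_I=\mathcal{P}_\Gamma$, that $x^*(P_\alpha x_0)=0$ for the relevant indices while $x^*(x_0)=1$; all of that is correct. But the obstacle you flag at the end is not a technicality that a better choice of $x_0$ (or even of $I$) will remove: the headline claim ``$\mathcal{P}_I$ is not inner'' is false in general. Take $X=\ell^\infty$ and $I=c_0$, a closed ideal that is not a band. With respect to the coordinate band projections $\{P_n\}_{n\in\N}$ one has $\mathcal{P}_{c_0}=\mathcal{P}_{\N\times\N}$: for $T\ge0$ and $x\ge0$, every $y\in c_0$ with $0\le y\le x$ is the norm limit of $\sum_{n\in F}P_ny\le\sum_{n\in F}P_nx$, and since $z=\bigvee_m P_mz$ for every $z\ge0$ in $\ell^\infty$, one checks that $T_{c_0}x=\sup_{\Phi\in\Fin(\N\times\N)}\sum_{(m,n)\in\Phi}P_mTP_nx$. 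So this $\mathcal{P}_I$ is inner, and no contradiction can be extracted from it; your own computation is consistent with this, since for $T=x^*\otimes x_0$ with $x^*$ a Banach limit and $x_0=(1,1,\dots)$ both $\mathcal{P}_{c_0}(T)$ and $\mathcal{P}_{\N\times\N}(T)$ vanish. The failure of order continuity of $x^*$ is exactly what makes the last step collapse, and it is unavoidable.

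The paper therefore does not try to prove that $\mathcal{P}_I$ is non-inner. It argues by cases: either $\mathcal{P}_I$ is not inner (done), or $\mathcal{P}_I=\mathcal{P}_\Gamma$ for some family $\{B_\lambda\}_{\lambda\in\Lambda}$, and then the identity $\mathcal{P}_I=\mathcal{P}_\Gamma$ forces each $B_\lambda$ to be either contained in $I$ or disjoint from $B_I$ (proved by testing against $I_X$ and against rank-one operators $x^*\otimes y$ with $x^*|_I=0$, much as you do). In the second case one builds a positive functional $x^*$ vanishing on both $I$ and $B_I^{d}$ but not on $B_I$, so that $T=x^*\otimes y$ satisfies $TP_\lambda=0$ for all $\lambda$, and then shows that the band projection onto the principal band $\mathcal{B}_T$ generated by $T$ is not inner with respect to \emph{any} family of pairwise disjoint band projections: given any candidate $\mathcal{P}_\Delta$, one produces a rank-one operator $S$ disjoint from $T$ with $\mathcal{P}_\Delta(S)\neq0$. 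Your argument needs this second stage (or some substitute for it); as it stands the gap is essential, not cosmetic.
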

\begin{proof}
    Since $X$ is not order continuous, by \cite[Proposition
    1.a.11]{lindenstrauss-tzafriri} there
    exists a closed ideal $I$ in $X$ that is not a band. If we denote
    by
    $B_I=B(I)$ the band generated by $I$, then we are saying that $I\neq B_I$. If
    $\mathcal{P}_I$ is not inner we are done, so we may assume that
    $\mathcal{P}_I$ is inner. In this case, there exists a family of disjoint bands
    $\{B_\lambda \}_{\lambda \in \Lambda }$ and some $\Gamma \subseteq
    \Lambda \times \Lambda $ such that
    $\mathcal{P}_I=\mathcal{P}_\Gamma $; we may assume, without loss
    of generality, that the band generated by $\{B_\lambda \}_{\lambda
    \in \Lambda }$ is $X$ or, in other words, that $x=\bigvee_{\lambda
    \in \Lambda } P_\lambda x$ for all $x \in X_+$, where $P_\lambda $
    denotes the band projection
    onto $B_\lambda $.

    We claim that, for every $\lambda \in \Lambda  $, if
    $\lambda \in \pi _2(\Gamma )$ then $B_\lambda \subseteq I$, while
    if $\lambda \not\in \pi _2(\Gamma )$ then $B_\lambda \cap
    B_I=\{0\}$. Indeed, suppose that $\lambda_0 \not\in \pi _2(\Gamma
    )$
    but $B_{\lambda_0} \cap B_I\neq \{0\}$. Given $x_0\in
    B_{\lambda_0} \cap
    B_I$, $x_0>0$, we have
    \[
    \mathcal{P}_I(I_X)x_0=P_{B_I}x_0=x_0>0
    \]
    while
    \[
    0\le \mathcal{P}_\Gamma (I_X)x_0\le \bigvee_{\lambda \in \Lambda
    ,\lambda \neq \lambda _0}P_\lambda x_0=0.
    \]
    This contradicts the fact that $\mathcal{P}_I=\mathcal{P}_\Gamma
    $. Hence $B_{\lambda _0}\cap B_I=\{0\}$.

    If instead $\lambda _0=\beta _0 \in \pi _2(\Gamma )$, say $(\alpha _0,\beta
    _0) \in \Gamma $, and we suppose that $B_{\beta _0} \not \subseteq I$,
    fix $x_0\in B_{\beta _0}\setminus I$, $x_0>0$.
    By \cref{prop:separation} there exists $x^{*} \in  X^{*}_+$ such
    that $x^{*}|_I=0$ and $x^{*}(x_0)=1$. Fix also $y \in B_{\alpha
    _0}$, $y>0$, and consider the positive operator $T=x^{*}\otimes
    y$. Since $T|_I=0$, it follows that $\mathcal{P}_I(T)=T_I=0$,
    whereas
    \[
    \mathcal{P}_\Gamma (T)x_0\ge P_{\alpha _0}TP_{\beta
    _0}x_0=P_{\alpha _0}(y)x^{*}(x_0)=y> 0.
    \]
    Hence $\mathcal{P}_\Gamma (T)\neq \mathcal{P}_I(T)$, contradicting
    the assumption that $\mathcal{P}_\Gamma =\mathcal{P}_I$.

So we must have $B_\lambda  \subseteq I$ whenever $B_{\lambda
}\cap B_I\neq \{0\}$.
Since $I$ is not equal to $B_I$, by
\cref{prop:separation} there exists some $y^{*}\in X^{*}_+$ such
that $y^{*}|_I=0$ while $y^{*}(x_0)=1$ for a given $x_0\in B_I$,
$x_0>0$. Let $P\colon X\rightarrow X$ be the band projection onto $B_I$, and define the positive functional $x^*=P^*y^*$. Then $x^*$ is such that $x^{*}|_{B_I^{d}}=0$ and $x^*|_I=0$, while $x^{*}(x_0)=1$.
Pick any $y>0$ and define the positive operator $T=x^{*}\otimes y$. Clearly, we have that
$TP_\lambda =0$ for every $\lambda \in \Lambda $
because
for every $\lambda \in \Lambda $ either $B_\lambda \subseteq I$ or
$B_\lambda \cap B_I=\{0\}$, which implies $B_\lambda \subseteq
B_I^{d}$.

We claim that the band generated by $T$ in $\Lr X$, $\mathcal{B}_T$,
is not inner. To check this,
    let $\{C_\xi \}_{\xi \in \Xi }$ be any family of
    disjoint bands in $X$, and let $\{Q_\xi \}_{\xi  \in \Xi }$ be the
    corresponding band projections. Suppose that the projection onto
    $\mathcal{B}_T$ is $\mathcal{P}_\Delta $ for some $\Delta
    \subseteq \Xi \times \Xi $. Since $\mathcal{B}_T$ is not the zero
    band, $\Delta $ is not empty.
    Fix some $(\gamma  _0,\delta  _0)\in \Delta $. Since the family
    $\{B_\lambda \}_{\lambda \in \Lambda }$ forms a decomposition of
    $X$, there exist $\lambda _1,\lambda _2 \in \Lambda $ such that
    $D=B_{\lambda _1}\cap C_{\delta  _0}\neq \{0\}$ and $D'=B_{\lambda
    _2}\cap C_{\gamma  _0}\neq \{0\}$. Fix $y_0\in D$, $y_0>0$, by
    \cref{prop:separation} there exists $f \in X^{*}_+$ such that
    $f|_{D^{d}}=0$ and $f(y_0)=1$. Fix $z_0 \in D'$, $z_0>0$, and
    consider the positive operator $S=f\otimes z_0$. Then for every $x
    \in X_+$:
    \[
        (T\wedge S)(x)=\inf_{0\le u\le x} Tu+S(x-u)=0,\quad
        \text{taking }u=P_{\lambda _1}Q_{\delta  _0}x,
    \]
    so $S \in \mathcal{B}_T^{d}$. However
    \[
    \mathcal{P}_\Delta  (S)(y_0)\ge Q_{\gamma  _0}SQ_{\delta
    _0}y_0=Q_{\gamma _0}z_0=z_0>0,
    \]
    which implies $\mathcal{P}_\Delta(S)\neq  0$, a contradiction with
    the fact that $S \in \mathcal{B}_T^{d}$. This contradiction arises
    from the assumption that $\mathcal{P}_\Delta $ is the band projection
    onto $\mathcal{B}_T$. Therefore, the band projection onto
    $\mathcal{B}_T$ cannot be inner.
\end{proof}

When the Banach lattice $X$ is order continuous, but not atomic, we can still find band projections on the space of regular operators that are not inner.

\begin{lem}\label{lem:not_atomic}
    Let $X$ be a Dedekind complete Banach lattice. If $X$ is order
    continuous and not atomic, there exists a band projection on $\Lr
    X$ that is not inner.
\end{lem}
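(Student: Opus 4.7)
My plan is to exhibit a single band projection on $\Lr X$ that cannot be inner, taking as candidate the band projection $\mathcal{P}_{\mathcal{Z}}$ onto the centre $\mathcal{Z}(X)=\{I_X\}^{dd}\subseteq \Lr X$. Suppose for contradiction that $\mathcal{P}_{\mathcal{Z}}=\mathcal{P}_\Gamma$ for some pairwise disjoint family $\{P_\lambda\}_{\lambda\in\Lambda}$ and some $\Gamma\subseteq\Lambda\times\Lambda$; after adjoining the complementary band projection if needed, I may assume $\bigvee_\lambda P_\lambda=I_X$. Applying both sides to $I_X$ and using $P_\alpha I_X P_\beta=P_\alpha P_\beta=\delta_{\alpha,\beta}P_\alpha$ yields
\[
I_X=\mathcal{P}_{\mathcal{Z}}(I_X)=\mathcal{P}_\Gamma(I_X)=\bigvee_{(\alpha,\alpha)\in\Gamma}P_\alpha.
\]
Since $X$ is not atomic, its non-atomic part $X_n$ is nonzero; by distributivity in the Boolean algebra of band projections on $X$, there exists $\alpha_0$ with $(\alpha_0,\alpha_0)\in\Gamma$ and $B_{\alpha_0}\cap X_n\neq0$. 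This intersection is a nonzero band sitting inside a non-atomic lattice, so it decomposes as $C_1\oplus C_2$ with $C_1, C_2$ disjoint nonzero sub-bands of $B_{\alpha_0}$.

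Now I build the separating operator. Pick strictly positive $x\in C_1$ and $z_0\in C_2$, and invoke \cref{prop:separation} on the band $C_2^d$ (which does not contain $z_0$) to obtain $f\in X^*_+$ with $f\equiv 0$ on $C_2^d$ and $f(z_0)=1$. Set $T=f\otimes x\in\Lpos X$. Since $x,z_0\in B_{\alpha_0}$ and $(\alpha_0,\alpha_0)\in\Gamma$,
\[
\mathcal{P}_\Gamma(T)(z_0)\;\ge\;P_{\alpha_0}TP_{\alpha_0}(z_0)\;=\;f(z_0)\,P_{\alpha_0}(x)\;=\;x\;>\;0,
\]
so $\mathcal{P}_\Gamma(T)\neq 0$.

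The crux is then to prove that, by contrast, $\mathcal{P}_{\mathcal{Z}}(T)=0$, i.e.\ every $S\in\mathcal{Z}(X)_+$ with $S\le T$ vanishes. Centrality of $S$ means $S$ leaves every band invariant, so for $y\in(C_2)_+$ we have $Sy\in C_2$ while $Sy\le Ty=f(y)x\in C_1$; applying the band projection onto $C_2$ and using $C_1\perp C_2$ forces $Sy=0$. For $y$ in $C_1$ or in $(C_1\oplus C_2)^d$, both contained in $C_2^d$, we have $f(y)=0$, hence $Ty=0$ and therefore $Sy=0$. By linearity and the decomposition $X=C_1\oplus C_2\oplus(C_1\oplus C_2)^d$, we conclude $S=0$, so $\mathcal{P}_{\mathcal{Z}}(T)=0$, contradicting $\mathcal{P}_\Gamma(T)\neq 0$. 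I expect this centrality-plus-disjointness argument to be the only delicate step; everything else is Boolean-algebraic bookkeeping together with the standard Hahn--Banach separation of \cref{prop:separation}.
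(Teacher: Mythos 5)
Your proof is correct, and it reaches the conclusion by a genuinely different and more elementary route than the paper's. Both arguments target the same candidate, the band projection $\mathcal{P}_{\mathcal Z}$ onto the centre, and both begin with essentially the same reduction: if $\mathcal{P}_{\mathcal Z}=\mathcal{P}_\Gamma$, then the diagonal part of $\Gamma$ must carry all of $X$, so some diagonal block $B_{\alpha_0}$ meets the non-atomic part (the paper phrases this by testing against $P_e$ for a suitable non-atomic principal band $B_e$, you by evaluating both sides at $I_X$; your version is cleaner). The divergence is in the witness operator. The paper represents $B_e$ as an ideal $\tilde X$ in $L_1(\Omega,\Sigma,\mu)$ via the Lindenstrauss--Tzafriri representation theorem, takes the averaging operator $f\mapsto(\int f\,d\mu)\chi_\Omega$, and proves it is disjoint from $I_X$ by Riesz--Kantorovich together with splitting each $A\in\Sigma$ into $n$ pieces of equal measure. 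You instead split $B_{\alpha_0}\cap X_n$ into two disjoint nonzero bands $C_1,C_2$ (which only uses that a non-atomic band contains two disjoint nonzero elements) and take the rank-one ``off-diagonal'' operator $T=f\otimes x$ with $x\in C_1$ and $f$ supported on $C_2$; your verification that $\mathcal{P}_{\mathcal Z}(T)=0$ via band-invariance of central operators is sound (one could even get it directly from Riesz--Kantorovich, since $(T\wedge I_X)(z)\le Tz\wedge z=0$ for $z\in (C_2)_+$ and $Tz=0$ for $z\in (C_2^d)_+$), and $\mathcal{P}_\Gamma(T)(z_0)\ge P_{\alpha_0}TP_{\alpha_0}z_0=x>0$ gives the contradiction. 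What your approach buys is the complete avoidance of the $L_1(\mu)$ representation theorem and of any measure-theoretic exhaustion; what it costs is nothing for the purposes of this lemma. The only points worth tightening are the appeal to ``distributivity'' (more directly: if $0<w\in X_n$ and $w=\sup_\Phi\sum_{\alpha\in\Phi}P_\alpha w$ over the diagonal indices, some $P_{\alpha_0}w\neq0$, and $0<P_{\alpha_0}w\le w$ puts it in $B_{\alpha_0}\cap X_n$) and an explicit word that, for Dedekind complete $X$, the band generated by $I_X$ consists of operators preserving all bands, which the paper covers by citing that this band coincides with the ideal centre.
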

\begin{proof}
    Since $X$ is Dedekind complete, the center of $X$ is the band
    generated by $I_X$ in $\Lr X$ (see \cite[Theorem
    3.31]{abramovich-aliprantis}). Let $\mathcal{P}_Z$ be the band
    projection onto the center. We are going to show that
    $\mathcal{P}_Z$ is not inner.

    Let $\{B_\lambda \}_{\lambda \in
    \Lambda }$ be any decomposition of $X$ into disjoint bands. Since
    $X$ is not atomic, there exists some $\lambda _0\in \Lambda $ such
    that $B_{\lambda _0}$ is not atomic. We may write
    \[
    B_{\lambda _0}=B_{\lambda _0}^{1}\oplus B_{\lambda _0}^{2},
    \]
    where $B_{\lambda _0}^{2}$ is the band generated by the atoms
    contained in $B_{\lambda _0}$ and $B_{\lambda _0}^{1}$ is its
    complementary; that is, $B_{\lambda _0}^{1}$ does not contain any
    atom (of $X$, since atoms relative to an ideal are also atoms in
    the whole space). By assumption, $B_{\lambda _0}^{1}\neq \{0\}$, so
    we can pick some $e \in B_{\lambda _0}^{1}$, $e>0$. Then $B_e$ is
    a band in $X$ containing no atoms and let $P_e$ denote the corresponding band projection.

    Since $B_e$ is an order continuous Banach lattice with weak unit
    $e$, by \cite[Theorem 1.b.14]{lindenstrauss-tzafriri} there exists
    a probability space $(\Omega ,\Sigma ,\mu )$, a (non
    necessarily closed) ideal $\tilde X$ in $L_1(\Omega ,\Sigma ,\mu
    )$ and a lattice norm $\|{\cdot }\|_{\tilde X}$ in $\tilde X$ such that $B_e$ is
    order isometric to $(\tilde X, \|{\cdot }\|_{\tilde X})$ and we
    have continuous and dense inclusions
    \[
    L_\infty (\Omega ,\Sigma ,\mu )\hookrightarrow \tilde X
    \hookrightarrow L_1(\Omega ,\Sigma ,\mu ).
    \]
    Note that, if $(\Omega ,\Sigma ,\mu )$ had an atomic measurable
    set $A \in \Sigma $, then $\chi _A \in L_\infty (\Omega ,\Sigma
    ,\mu )\subseteq \tilde X$ would be an atom, contradicting the fact
    that $B_e$ has no atoms.

    Consider the positive operator
    \[
    \begin{array}{cccc}
    T\colon& L_1(\Omega ,\Sigma ,\mu ) & \longrightarrow & L_1(\Omega
    ,\Sigma ,\mu ) \\
            & f & \longmapsto & \int_{\Omega }^{} f\, d \mu \; \chi
            _\Omega   \\
    \end{array}.
    \]
    Clearly, we can restrict $T\colon \tilde X\to \tilde X$ and,
    by means of the aobve-mentioned order isometry, we can think
    of this operator as a positive operator $T\colon B_e\to B_e$.
    Define $S=TP_e\colon X\to X$. We claim now that
    $\mathcal{P}_\Gamma \neq \mathcal{P}_Z$ for every $\Gamma
    \subseteq \Lambda \times \Lambda $. Clearly, if $(\lambda
    _0,\lambda _0) \not\in \Gamma $, $\mathcal{P}_\Gamma (P_e)=0$
    whereas $\mathcal{P}_Z(P_e)=P_e$ (recall that $0\le P_e\le I_X$).
    So we must have $(\lambda _0,\lambda _0)\in \Gamma $, in which
    case $\mathcal{P}_\Gamma (S)=S$; we are going to show that
    $\mathcal{P}_Z(S)=0$, thus completing the argument.

    For this, we will again think of $T$ as an operator acting on
    $\tilde X$ and in the same way we will identify $I_{B_e}$ with
    $I_{\tilde X}$. Applying the
    Riesz--Kantorovich formulae
    we have that for every $A \in \Sigma $:
    \begin{align*}
        (T\wedge I_{\tilde X})(\chi _A)&=\inf_{0\le y\le \chi  _A}
        Ty+(\chi
        _A-y)\\
                                  &\le \inf_{B\subseteq A, B \in \Sigma } T \chi
                                  _{B} + \chi _{A\setminus B}\\
                                  &=\inf_{B\subseteq A, B \in \Sigma
                                  }\mu (B) \chi _\Omega +\chi
                                  _{A\setminus B}.
    \end{align*}
    We have already observed that the measure space $(\Omega, \Sigma, \mu)$ has no atoms, so for
    every $n \in \N$ there exist pairwise disjoint $B_1,\ldots ,B_n \in \Sigma $ such
    that
    \[
    A=\bigcup_{j=1}^{n} B_j,\quad\text{with }\mu (B_j)=\frac{\mu
    (A)}{n}\text{ for }j=1,\ldots ,n.
    \]
    Applying these sets to the previous inequality gives
    \[
        (T\wedge I_{\tilde X})( \chi _A)\le \bigwedge_{j=1} ^{n} \left(\frac{\mu
        (A)}{n} \chi _{\Omega } + \chi _{A\setminus
    B_j}\right)=\frac{\mu (A)}{n} \chi _{\Omega }.
    \]
    Since this inequality holds for every $n \in \N$, it follows that
    $(T\wedge I_{\tilde X})(\chi _A)=0$ for every $A \in \Sigma $. But the linear span of the
    characteristic functions is dense in $L_\infty (\Omega
    ,\Sigma ,\mu )$, which is in turn dense in $\tilde X$, so $T\wedge
    I_{\tilde X}$ is zero on $\tilde X$. Going back to the space
    $X$, this means that $T \wedge I_{B_e}$ is zero on $B_e$; but $S$ is
    zero outside $B_e$, and is $T$ when restricted to $B_e$, so
    $S\wedge I_X=0$. In conclusion,
    $\mathcal{P}_Z(S)=0$, and this finishes the proof.
\end{proof}

To complete the proof of \cref{thm:allinner} it only remains to show that, if the underlying Banach lattice $X$ is atomic and order continuous, then all the band projections on $\Lr X$ are inner.

Let $X$ be a Dedekind complete Banach lattice and let $a \in X$ be an atom. The band generated by $a$ is $B_a=\spn\{a\}$. Let $P_a$ be the projection onto $B_a$. Then for every $x\in X$ we can then write $P_a(x)=\lambda_a(x)a$, where $\lambda_a\colon X\rightarrow \R$ is a positive functional. Banach lattices that are both atomic and order continuous are characterized by the following well known property (see \cite[Chapter 5]{Troitsky}).

\begin{thm}
    Let $X$ be an atomic Banach lattice and let $A$ be a maximal family of pairwise disjoint atoms in $X$. Then $X$ is order continuous if and only if
    \[
    x=\sum_{a \in A} \lambda_a(x)a
    \]
    for every $x\in X_+$, where for each $x$, the series has at most countably many nonzero terms and converges unconditionally.
\end{thm}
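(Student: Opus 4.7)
The natural plan is to pivot both implications on the increasing net of finite band projections $P_F := \sum_{a \in F} P_a$ indexed by $F \in \Fin(A)$, where $P_a(x) = \lambda_a(x) a$ is the band projection onto $B_a = \spn\{a\}$, together with the band $Y := \bigvee_{a \in A} B_a$ generated by the atoms.

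For the forward direction I would first check that $Y = X$. In an order continuous Banach lattice every band is a projection band, so $Y$ is one; if its disjoint complement $Y^\perp$ contained some $z > 0$, atomicity would produce an atom $a' \le z$ disjoint from every element of $A$, contradicting maximality of $A$. Hence the band projection onto $Y$ equals $I_X$, and for every $x \in X_+$ the increasing net $(P_F x)_{F \in \Fin(A)}$ satisfies $P_F x \uparrow x$ in order. Order continuity then promotes this to $\|x - P_F x\| \to 0$, which is by definition the unconditional convergence of $\sum_{a \in A} \lambda_a(x) a$ to $x$. That at most countably many $\lambda_a(x)$ are nonzero follows from the standard fact that an unconditionally convergent series in a Banach space has only finitely many terms of norm exceeding $1/n$ for each $n$, and a union over $n$ gives the claim.

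For the reverse implication I would show that $x_n \downarrow 0$ implies $\|x_n\| \to 0$. Fix $\ep > 0$ and, using the assumed representation applied to $x_1$, choose finite $F \subseteq A$ with $\|(I - P_F) x_1\| < \ep/2$. Since $0 \le (I - P_F) x_n \le (I - P_F) x_1$, the estimate $\|(I - P_F) x_n\| < \ep/2$ holds uniformly in $n$. On the other hand, $P_F$ takes values in the finite-dimensional sublattice $\spn\{a : a \in F\}$, where order and norm convergence coincide, so $P_F x_n \downarrow 0$ gives $\|P_F x_n\| < \ep/2$ eventually. Combining the two estimates yields $\|x_n\| < \ep$ for large $n$.

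The delicate step is the identification $Y = X$ in the forward direction, which simultaneously uses maximality of $A$ and the strong form of atomicity asserting that every nonzero positive vector dominates an atom; the remainder reduces to routine manipulations of positive band projections and the elementary description of unconditional convergence for nets of positive disjoint partial sums.
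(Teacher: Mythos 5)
The paper offers no proof of this statement at all: it is quoted as a known characterization with a pointer to Troitsky's book, so there is nothing internal to compare against. Your argument is the standard self-contained proof and is essentially correct. In the forward direction, maximality of $A$ together with atomicity gives $\bigl(\bigvee_{a\in A}B_a\bigr)^{\perp}=\{0\}$; then $\sup_{F}P_F x=x$ for $x\ge 0$ (in the Dedekind complete setting, $x-\sup_F P_F x$ is disjoint from every $B_a$), and order continuity upgrades this order convergence of the increasing net of finite partial sums to norm convergence, which for a positive disjoint family is exactly unconditional summability; the countability of the support is the standard Cauchy-tail argument you cite. In the reverse direction, the decomposition $x=P_Fx+(I-P_F)x$, with the tail controlled uniformly along the decreasing net by the representation of a fixed initial term and with $P_F$ landing in a finite-dimensional sublattice where order and norm convergence agree, is the right mechanism.

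One imprecision worth fixing: you run the reverse implication over sequences $x_n\downarrow 0$, which only establishes $\sigma$-order continuity, and for a Banach lattice that is not assumed Dedekind $\sigma$-complete this is genuinely weaker than order continuity. Fortunately your argument transfers verbatim to an arbitrary net $x_\alpha\downarrow 0$: fix an index $\alpha_0$, choose the finite set $F$ from the representation of $x_{\alpha_0}$, observe $0\le (I-P_F)x_\alpha\le (I-P_F)x_{\alpha_0}$ for $\alpha\ge\alpha_0$, and note that $P_Fx_\alpha\downarrow 0$ (since $0\le\inf_\alpha P_Fx_\alpha\le\inf_\alpha x_\alpha=0$) forces $\|P_Fx_\alpha\|\to 0$ in the finite-dimensional range. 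State it for nets and the proof is complete.
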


The following is the natural generalization of elementary matrices on $M_n(\R)$ (i.e., those with one in a given entry and zeros everywhere else) to general atomic Banach lattices.

\begin{defi}
    Let $X$ be an atomic Banach lattice and let $A$ be a maximal
    family of disjoint atoms. We call \emph{elementary operators} the
    operators of the form $E_{ab}=\lambda _b\otimes a$,
    where $a,b \in A$.
\end{defi}

Elementary operators have the following properties.

\begin{lem}\label{lem:eleop}
    Let $X$ be an atomic and Dedekind complete Banach lattice, and let
    $A$ be a maximal family of disjoint atoms.
    \begin{enumerate}
        \item Elementary operators are atoms of $\Lr X$.
        \item If $\mathcal{P}\colon \Lr X\to \Lr X$ is a band
            projection,
            $\mathcal{P}(E_{ab})$ is either $E_{ab}$ or $0$ for any
            $a,b \in A$.
    \end{enumerate}
\end{lem}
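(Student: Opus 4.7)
My plan is to handle (1) first with a direct structural argument, and then derive (2) as an immediate consequence of (1) together with the idempotency $\mathcal{P}^2=\mathcal{P}$.

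For (1), I would take $T\in\Lr X$ with $0\le T\le E_{ab}$ and aim to show $T=\al E_{ab}$ for some $\al\in[0,1]$. The first observation is that for every $x\in X_+$, the inequality $0\le Tx\le \lam_b(x)\,a$, combined with $a$ being an atom of $X$ (so the order interval $[0,\lam_b(x)a]$ is contained in $\spn\{a\}$), forces $Tx\in\spn\{a\}$. In particular, if $x$ is disjoint from $b$, then $\lam_b(x)=0$ and hence $Tx=0$. Thus $T$ vanishes on the disjoint complement of $B_b=\spn\{b\}$, i.e., $T=TP_b$, where $P_b$ is the band projection onto $B_b$. Since $P_b x=\lam_b(x)\,b$, for every $x\in X_+$ we obtain
\[
Tx=T(P_b x)=\lam_b(x)\,Tb.
\]
Specializing $0\le T\le E_{ab}$ at $x=b$ gives $0\le Tb\le a$, and because $a$ is an atom this forces $Tb=\al a$ for some $\al\in[0,1]$. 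Substituting back yields $Tx=\al\lam_b(x)a=\al E_{ab}(x)$ on the positive cone, so $T=\al E_{ab}$, as required.

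For (2), I would use that any band projection on $\Lr X$ satisfies $0\le\mathcal{P}\le I_{\Lr X}$. Since $E_{ab}\ge 0$, this yields $0\le\mathcal{P}(E_{ab})\le E_{ab}$. By part (1), $\mathcal{P}(E_{ab})=\al E_{ab}$ for some $\al\in[0,1]$. Applying $\mathcal{P}$ once more and using $\mathcal{P}^2=\mathcal{P}$ gives
\[
\al E_{ab}=\mathcal{P}(\al E_{ab})=\al\,\mathcal{P}(E_{ab})=\al^2 E_{ab},
\]
so $\al^2=\al$, forcing $\al\in\{0,1\}$, i.e., $\mathcal{P}(E_{ab})\in\{0,E_{ab}\}$.

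I do not foresee any serious obstacle here; the only slightly delicate step is the passage from the pointwise statement ``$Tx$ lies in $\spn\{a\}$ for every $x\in X_+$'' to the structural statement ``$T$ is a scalar multiple of $E_{ab}$'', which is exactly what the observation $T=TP_b$ is designed to bridge. Everything else is a direct application of the definitions of atom, band projection, and the atomicity of $a$.
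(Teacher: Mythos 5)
Your proof is correct and follows essentially the same route as the paper: establish $Tx=\lam_b(x)\,Tb$ for $0\le T\le E_{ab}$, use that $a$ is an atom to get $Tb=\al a$, and then derive (2) from (1) together with $0\le\mathcal P\le I_{\Lr X}$ and $\mathcal P^2=\mathcal P$ exactly as the paper does. The only (harmless) difference is how you obtain $T=TP_b$: you use the band decomposition $X=B_b\oplus B_b^{\perp}$ and the vanishing of $T$ on $B_b^{\perp}$, whereas the paper estimates $Tx-TP_bx$ via the atomic decomposition $x=\bigvee_{\alpha\in A}P_\alpha x$; both yield the same identity.
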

\begin{proof}
    \begin{enumerate}
        \item Given $a,b \in A$, by definition $a\ge 0$ and also
            $\lambda _b\ge 0$, so $E_{ab}=\lambda _b\otimes a\ge 0$.
         Let $0\le T\le E_{ab}$. For every $x \in X_+$ we have
            \begin{align*}
                0\le Tx - TP_bx&=T\bigg(\bigvee_{\alpha \in A}
                P_\alpha x-P_bx\bigg)\\
                               &=T\bigg(\bigvee_{\alpha \in A}
                               (P_\alpha x-P_bx)\bigg)\\
                               &\le T\bigg(\bigvee_{\alpha \in A,\alpha\neq  b}
                               P_\alpha x\bigg)\\
                               &\le E_{ab}\bigg(\bigvee_{\alpha\in A, \alpha\neq b}
                               P_\alpha x\bigg)=\lambda _{b}\bigg(\bigvee_{\alpha
                                   \in A,\alpha\neq b}
                               P_\alpha x\bigg)a=0.
            \end{align*}
            Therefore $Tx=T(P_bx)=\lambda _b(x)Tb$. But $0\le Tb\le
            E_{ab}(b)=a$, and since $a$ is an atom, it follows that
            $Tb=\gamma a$ for a certain $\gamma \in \R_+$. So
            \[
            Tx=\lambda _b(x)\gamma a=\gamma E_{ab}(x)\quad\text{for
            every }x \in X_+,
            \]
            that is, $T=\gamma E_{ab}$.
        \item Since $\mathcal P$ is a band projection, $0\le
            \mathcal{P}(E_{ab})\le E_{ab}$ for any $a,b \in A$. Since
            elementary operators are atoms,
            $\mathcal{P}(E_{ab})=\gamma
            E_{ab}$ for a certain $\gamma \in \R_+$. Now using
            $\mathcal{P}^2=\mathcal{P}$,
            \[
            \gamma
            E_{ab}=\mathcal{P}(E_{ab})=\mathcal{P}(\mathcal{P}(E_{ab}))=\gamma
            ^2E_{ab},
            \]
            which implies that either $\gamma =0$ or $\gamma
            =1$.\qedhere
    \end{enumerate}
\end{proof}

\begin{rem}\label{rem:bandproj_eleop}
    Under the hypotheses of previous lemma, fix $a,b \in A$. Then for any $T \in \Lpos X$
    and $x \in X_+$ we have:
    \[
    P_aTP_bx=\lambda _b(x) P_a(Tb)=\lambda _b(x)\lambda
    _a(Tb)a=\lambda _a(Tb)E_{ab}x.
    \]
    Hence $P_aTP_b=\lambda _a(Tb)E_{ab}$. Previous lemma
    implies that, whenever $\mathcal{P}(E_{ab})=E_{ab}$, $\mathcal{P}(P_aTP_b)=P_aTP_b$ for every $T
    \in \Lpos X$ (and therefore for every $T \in \Lr X$), whereas, if
    $\mathcal{P}(E_{ab})=0$, then
    $\mathcal{P}(P_aTP_b)=0$ for every $T \in \Lr X$.
\end{rem}

\begin{lem}\label{lem:atomic_ordcont}
    Let $X$ be an atomic and order continuous Banach lattice. Then every band projection on $\Lr X$ is inner.
\end{lem}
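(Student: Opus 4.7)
My plan is to exhibit, for any band projection $\mathcal{P}$ on $\Lr X$, a subset $\Gamma \subseteq A \times A$---where $A$ is a maximal family of pairwise disjoint atoms of $X$---such that $\mathcal{P} = \mathcal{P}_\Gamma$. The associated band projections $\{P_a\}_{a \in A}$ are pairwise disjoint and, since $X$ is atomic and order continuous, they form a decomposition of $X$; consequently, the computation in the order continuous case carried out after \cref{prop:boolean_alg} gives $\mathcal{P}_{A \times A} = I_{\Lr X}$. Because \cref{lem:eleop}(2) forces $\mathcal{P}(E_{ab}) \in \{0, E_{ab}\}$ for every $a,b \in A$, the natural candidate is
$$\Gamma = \{\, (a,b) \in A \times A \mid \mathcal{P}(E_{ab}) = E_{ab}\, \}.$$

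By linearity, it suffices to verify $\mathcal{P}(T) = \mathcal{P}_\Gamma(T)$ for $T \in \Lpos X$. The key ingredient is the representation
$$T = \mathcal{P}_{A\times A}(T) = \bigvee_{\Phi \in \Fin(A\times A)} \sum_{(a,b)\in \Phi} P_a T P_b,$$
obtained by combining $\mathcal{P}_{A\times A} = I_{\Lr X}$ with \cref{lem:pointwise}, which is an increasing net in $\Lr X$. \Cref{rem:bandproj_eleop} gives exactly the dichotomy $\mathcal{P}(P_a T P_b) = P_a T P_b$ when $(a,b)\in\Gamma$, and $\mathcal{P}(P_a T P_b) = 0$ otherwise. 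Applying $\mathcal{P}$ to the supremum representation and using linearity inside each finite sum yields
$$\mathcal{P}(T) = \bigvee_{\Phi \in \Fin(A\times A)} \sum_{(a,b)\in \Phi \cap \Gamma} P_a T P_b = \bigvee_{\Phi \in \Fin(\Gamma)} \sum_{(a,b)\in \Phi} P_a T P_b = \mathcal{P}_\Gamma(T),$$
the last equality being \cref{lem:pointwise} applied to $\Gamma$.

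The step requiring the most care is the commutation of $\mathcal{P}$ with the increasing supremum, i.e., order continuity of the band projection $\mathcal{P}$ on $\Lr X$. This is a standard property: for any increasing bounded net $(S_\alpha) \uparrow S$ in $\Lr X$, the decomposition $S_\alpha = \mathcal{P}(S_\alpha) + (I-\mathcal{P})(S_\alpha)$ yields two increasing bounded nets whose suprema lie in the complementary bands $\range(\mathcal{P})$ and $\range(I-\mathcal{P})$, so by uniqueness of the band decomposition of $S$ they must equal $\mathcal{P}(S)$ and $(I-\mathcal{P})(S)$ respectively. With this order continuity in hand, the remainder is direct bookkeeping: atomicity reduces the analysis to the combinatorial datum $\Gamma$ through \cref{lem:eleop}, and order continuity of $X$ is precisely what makes the ``matrix-style'' expansion of $T$ faithfully recover $T$.
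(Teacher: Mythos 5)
Your proof is correct, and while it starts from the same combinatorial datum as the paper---the set $\Gamma=\{(a,b): \mathcal{P}(E_{ab})=E_{ab}\}$ extracted via \cref{lem:eleop} and \cref{rem:bandproj_eleop}---it concludes by a genuinely different mechanism. The paper first shows only the inequality $\mathcal{P}\ge\mathcal{P}_\Gamma$ (by applying $\mathcal{P}$ to the supremum defining $\mathcal{P}_\Gamma(T)$ and using $\mathcal{P}(P_aTP_b)\in\{P_aTP_b,0\}$), and then rules out strict inequality by contradiction: it takes $T=\mathcal{P}(S)$ with $T-\mathcal{P}_\Gamma(T)>0$, uses the pointwise atomic decomposition $x=\sum_a P_a x$ in $X$ to localize the discrepancy at a single pair $(a,b)$, and derives a contradiction with the definition of $\Gamma$ in either case $(a,b)\in\Gamma$ or $(a,b)\notin\Gamma$. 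You instead compute $\mathcal{P}(T)$ directly by expanding $T=\mathcal{P}_{A\times A}(T)=\sup_{\Phi}\sum_{(a,b)\in\Phi}P_aTP_b$ (legitimate: the identity $\mathcal{P}_{A\times A}=I_{\Lr X}$ is exactly the order-continuous remark following \cref{prop:boolean_alg}, and the increasing-net form of the supremum is in the proof of \cref{lem:pointwise}) and then commuting $\mathcal{P}$ with the supremum of an increasing bounded net. That commutation is valid---in fact more cheaply than your band-decomposition argument: any operator with $0\le\mathcal{P}\le I$ is order continuous, since $S_\alpha\downarrow 0$ forces $0\le\mathcal{P}(S_\alpha)\le S_\alpha$---so your argument closes. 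What each approach buys: yours is shorter and makes transparent that the whole lemma reduces to ``$\mathcal{P}$ is order continuous on $\Lr X$ and is determined on the order-dense sublattice spanned by the $P_aTP_b$''; the paper's contradiction argument avoids invoking order continuity of band projections on $\Lr X$ and the identity $\mathcal{P}_{A\times A}=I_{\Lr X}$ as black boxes, working instead only with the pointwise decomposition in $X$ itself, at the cost of a longer case analysis.
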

\begin{proof}
    Let $A$ be a maximal family of pairwise disjoint atoms in $X$. Let $P_a$ be the band projection onto the band generated by $a$. We are going to consider inner band projections with respect to the family $\{P_a\}_{a \in A}$.
    For a given
    band projection $\mathcal{P}\colon \Lr X\to \Lr X$, define
    \[
    \Gamma =\{\, (a,b)\in A\times A \mid \mathcal{P}(E_{ab})=E_{ab} \,
    \}.
    \]
    Note that, using \cref{rem:bandproj_eleop}, we have for any $T \in
    \Lpos X$:
    \[
    \mathcal{P}(\mathcal{P}_\Gamma (T))=\mathcal{P}\bigg(
        \bigvee_{(a,b)\in \Gamma } P_aTP_b\bigg)\ge \bigvee_{(a,b)\in
    \Gamma } \mathcal{P}(P_aTP_b)=\bigvee_{(a,b)\in \Gamma }
    P_aTP_b=\mathcal{P}_\Gamma (T).
    \]
        Hence $\mathcal{P}\wedge \mathcal{P}_\Gamma
    =\mathcal{P}\mathcal{P}_\Gamma \ge \mathcal{P}_\Gamma $, which
    implies $\mathcal{P}_\Gamma \mathcal{P} =\mathcal{P}_\Gamma $ and also $\mathcal{P} \ge  \mathcal{P}_\Gamma $.
    We are going to show that $\mathcal{P}=\mathcal{P}_\Gamma $. Suppose, on the contrary, that they were different. Then there
    would exist some $S \in \Lpos X$ such that
    $\mathcal{P}(S)>\mathcal{P}_\Gamma (S)=\mathcal{P}_\Gamma
    (\mathcal{P}(S))$; in other words, $T=\mathcal{P}(S)\in
    \range(\mathcal{P})$ satisfies $T\ge 0$ and $T-\mathcal{P}_\Gamma
    (T)>0$. Let $x \in X_+$ be such that $(T- \mathcal{P}_\Gamma (T))x>0$,
    then by continuity and linearity of the operators
    \[
        (T-\mathcal{P}_\Gamma (T))x=\sum_{a \in A}^{}
        (T-\mathcal{P}_\Gamma (T))P_ax>0.
    \]
    It follows that there exists some $b \in A$ such that
    $(T-\mathcal{P}_\Gamma (T))P_bx>0$. This, in turn, implies that there is
    some $a \in A$ for which $P_a(T-\mathcal{P}_\Gamma (T))P_bx>0$.
    Hence
    \begin{equation}\label{eq:atordcont_ineq}
        P_a(T-\mathcal{P}_\Gamma (T))P_b>0.
    \end{equation}

    If $(a,b)\in \Gamma $, by definition of the inner band projections
    $P_a \mathcal{P}_ \Gamma (T)P_b=P_aTP_b$, contradicting
    \eqref{eq:atordcont_ineq}. Therefore $(a,b)\not\in \Gamma $, so
    $P_a \mathcal{P}_\Gamma (T)P_b=0$, which by
    \eqref{eq:atordcont_ineq} gives $P_aTP_b>0$. But since
    $0<P_aTP_b\le T$, $P_aTP_b$ is also an element of the band
    $\range( \mathcal{P})$, which implies
    $\mathcal{P}(P_aTP_b)=P_aTP_b>0$. According to the remark after
    \cref{lem:eleop},
    this is equivalent to $\mathcal{P}(E_{ab})=E_{ab}$, which contradicts
    the definition of $\Gamma $. This contradiction arises from
    assuming that $\mathcal{P}$ and $\mathcal{P}_\Gamma $ are
    different, so we must have
    $\mathcal{P}=\mathcal{P}_\Gamma $.
\end{proof}

From Lemmas~\ref{lem:not_ordcont}, \ref{lem:not_atomic} and~\ref{lem:atomic_ordcont} it immediately follows \cref{thm:allinner}. We end this section with a couple of consequences of \cref{thm:allinner} (although, to be more precise, these are only consequences of \cref{lem:atomic_ordcont}).

\begin{cor}
    Let $X$ be an atomic and order continuous Banach lattice. Let $A$
    be a maximal family of disjoint atoms in $X$. Then we have an
    isomorphism of Boolean algebras
    \[
    \begin{array}{cccc}
    & 2^{A\times A} & \longrightarrow & \mathfrak{P}(\Lr X) \\
            & \Gamma  & \longmapsto & \mathcal{P}_\Gamma  \\
    \end{array}
    \]
\end{cor}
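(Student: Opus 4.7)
The plan is to assemble the corollary directly from \cref{prop:boolean_alg}, the remark that follows it, and \cref{lem:atomic_ordcont}. Since $X$ is atomic and order continuous and $A$ is a maximal family of pairwise disjoint atoms, the family of band projections $\{P_a\}_{a\in A}$ onto the bands generated by each atom is pairwise disjoint, and moreover it forms a decomposition of $X$: for every $x\in X_+$, one has $x=\sum_{a\in A}\lambda_a(x)\,a=\sum_{a\in A}P_a x$, where the series converges unconditionally with at most countably many nonzero terms (this is the characterization of atomic order continuous Banach lattices recalled before \cref{lem:eleop}). Hence $\{P_a\}_{a\in A}$ satisfies the hypotheses needed to apply the constructions of \cref{sec:innerband}.

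First I would invoke \cref{prop:boolean_alg} with $\Lambda=A$: the map $\Gamma\mapsto\mathcal{P}_\Gamma$ is a Boolean algebra isomorphism from $2^{A\times A}$ onto the family of inner band projections associated with $\{P_a\}_{a\in A}$, with order, suprema and infima inherited from $\mathfrak{P}(\Lr X)$. Second, since $\{P_a\}_{a\in A}$ is a decomposition of $X$ and $X$ is order continuous, the remark following \cref{prop:boolean_alg} applies: $\mathcal{P}_{A\times A}=I_{\Lr X}$, and the inner band projections form a Boolean \emph{subalgebra} of $\mathfrak{P}(\Lr X)$. In particular, the map $\Gamma\mapsto\mathcal{P}_\Gamma$ is an injective Boolean algebra homomorphism into $\mathfrak{P}(\Lr X)$.

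Finally, \cref{lem:atomic_ordcont} yields that every band projection on $\Lr X$ is inner with respect to the chosen family $\{P_a\}_{a\in A}$ (inspecting the proof of that lemma, the inner representation is built precisely from this family). Hence $\Gamma\mapsto\mathcal{P}_\Gamma$ is surjective onto $\mathfrak{P}(\Lr X)$, and combining with the previous paragraph we obtain the desired Boolean algebra isomorphism.

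There is no real obstacle here: the only thing to verify carefully is that the family $\{P_a\}_{a\in A}$ is a decomposition of $X$, which is exactly what the atomic order continuous assumption guarantees, so that the map lands in $\mathfrak{P}(\Lr X)$ (not merely in a Boolean algebra with a different maximal element). Once that is checked, the corollary follows by combining the three ingredients above.
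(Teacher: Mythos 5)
Your proposal is correct and follows essentially the same route as the paper: combine \cref{lem:atomic_ordcont} (every band projection on $\Lr X$ is inner with respect to $\{P_a\}_{a\in A}$) with \cref{prop:boolean_alg} (the inner band projections form a Boolean algebra isomorphic to $2^{A\times A}$ via $\Gamma\mapsto\mathcal{P}_\Gamma$). Your additional check via the order-continuous remark that $\mathcal{P}_{A\times A}=I_{\Lr X}$, so the inner band projections form a genuine Boolean subalgebra of $\mathfrak{P}(\Lr X)$, is a careful touch the paper leaves implicit, but it does not change the argument.
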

\begin{proof}
    By \cref{lem:atomic_ordcont}, $\mathfrak{P}(\Lr X)$ contains only
    the inner band projections that arise from the band projections
    $\{P_a\}_{a \in A}$ on $X$. According to \cref{prop:boolean_alg}, the
    family of inner band projections forms a Boolean algebra that is
    isomorphic to $2^{A\times A}$ through the isomorphism given.
\end{proof}

\begin{cor}
    Let $X$ and $Y$ be atomic and order continuous Banach lattices.
    Let $A$ and $B$ be maximal families of disjoint atoms in $X$ and
    $Y$, respectively. If $|A|\neq |B|$, then the Banach lattice
    algebras $\Lr X$ and $\Lr Y$ are not isomorphic as Banach lattices.
\end{cor}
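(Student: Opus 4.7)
The plan is to argue by contradiction using the previous corollary. Assume that $\Phi\colon \Lr X\to \Lr Y$ is a Banach lattice isomorphism. I will use $\Phi$ to transfer band projections and obtain a Boolean algebra isomorphism $\mathfrak{P}(\Lr X)\cong \mathfrak{P}(\Lr Y)$; combined with the previous corollary, this will yield $2^{A\times A}\cong 2^{B\times B}$, which forces $|A|=|B|$ by cardinal arithmetic.

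The first step is to show that the conjugation map $P\mapsto \Phi P\Phi^{-1}$ sends $\mathfrak{P}(\Lr X)$ into $\mathfrak{P}(\Lr Y)$. The key point is that a Banach lattice isomorphism is positive with positive inverse; applying these to both sides of $0\le P\le I_{\Lr X}$ and using that composition preserves pointwise order on positive regular operators yields $0\le \Phi P\Phi^{-1}\le I_{\Lr Y}$. Since conjugation also preserves idempotence, the characterization of band projections via $P^2=P$ and $0\le P\le I$ from \cite[Theorem 1.44]{AB2006} ensures $\Phi P\Phi^{-1}$ is indeed a band projection. The same reasoning applied to $\Phi^{-1}$ shows that the map is a bijection with inverse $Q\mapsto \Phi^{-1}Q\Phi$, and a straightforward check gives that it preserves the order on $\mathfrak{P}$, hence all Boolean operations.

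With this isomorphism in hand, and applying the previous corollary to both $X$ and $Y$, I would compose to obtain $2^{A\times A}\cong 2^{B\times B}$ as Boolean algebras. Counting atoms --- which are precisely the singletons in $2^S$ --- yields $|A\times A|=|B\times B|$, from which standard cardinal arithmetic produces $|A|=|B|$: in the finite case this is simply $|A|^2=|B|^2$, while in the infinite case $|A|=|A\times A|=|B\times B|=|B|$. This contradicts the hypothesis $|A|\neq |B|$. The only step requiring any genuine care is the first one, but it falls out immediately from the purely order-theoretic characterization of band projections, so I do not expect any serious obstacle.
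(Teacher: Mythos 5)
Your proposal is correct and follows essentially the same route as the paper: the paper's one-line proof invokes the invariance of the Boolean algebra of band projections under lattice isomorphisms together with the preceding corollary identifying $\mathfrak{P}(\Lr X)$ with $2^{A\times A}$, and you simply make explicit the conjugation map $P\mapsto \Phi P\Phi^{-1}$, the characterization $P^2=P$, $0\le P\le I$, and the atom-counting/cardinal arithmetic. No gaps.
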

\begin{proof}
    This follows from the fact that the Boolean algebra formed by band projections is invariant under lattice isomorphisms.
\end{proof}

\section{The multiplication operator $L_AR_B$}\label{sec:multiplication}
Multiplication operators are defined in the following way.
\begin{defi}
Let $X_1,X_2,X_3,X_4$ be Banach spaces. Fixed $A \in
\mathcal{L}(X_1,X_2)$ and $B \in \mathcal{L}(X_3,X_4)$ we define the \emph{multiplication operator}
\[
\begin{array}{cccc}
    L_AR_B\colon& \mathcal{L}(X_4,X_1) & \longrightarrow & \mathcal{L}(X_3,X_2) \\
        & T & \longmapsto & ATB \\
\end{array}.
\]
\end{defi}
These operators have been an extensive object of research in different settings. Here we are going to characterize, assuming that $X=X_1=X_2=X_3=X_4$ is a Dedekind complete Banach lattice, when the multiplication operator $L_AR_B\colon \Lr X\rightarrow \Lr X$ is a band projection.

Our results will frequently rely on adaptations of the following
observation,
which can be found in \cite{mathieu-tradacete}.

\begin{rem}
    Let $X_1,X_2,X_3,X_4$ be Banach spaces and let $A \in
    \mathcal{L}(X_1,X_2)$ and $B \in \mathcal{L}(X_3,X_4)$ be nonzero
    bounded operators. Consider
    the multiplication operator $L_AR_B\colon\mathcal{L}(X_4,X_1)\to
    \mathcal{L}(X_3,X_2)$ given by $L_AR_B(T)=ATB$. Choose $x_1\in
    X_1$, $x_2^{*} \in X_2^{*}$, $x_3 \in X_3$ and $x_4^{*} \in
    X_4^{*}$ such that $x_2^{*}(Ax_1)=1=x_4^{*}(Bx_3)$. Then the
    operators
    \[
    \begin{array}{cccc}
    J_{x_4^{*}}\colon & X_1 & \longrightarrow & \mathcal{L}(X_4,X_1) \\
        & x & \longmapsto & x_4^{*}\otimes x \\
    \end{array},\quad
    \begin{array}{cccc}
    J_{x_1}\colon & X_4^{*} & \longrightarrow & \mathcal{L}(X_4,X_1) \\
        & x^{*} & \longmapsto & x^{*}\otimes x_1 \\
    \end{array},
    \]
    \[
    \begin{array}{cccc}
    \delta _{x_3}\colon & \mathcal{L}(X_3,X_2) & \longrightarrow & X_2 \\
        & T & \longmapsto & Tx_3 \\
    \end{array},\quad
    \begin{array}{cccc}
    \delta _{x_2^{*}}\colon & \mathcal{L}(X_3,X_2) & \longrightarrow &
    X_3^{*} \\
        & T & \longmapsto & T^{*}x_2^{*} \\
    \end{array},
    \]
    make the following diagrams commute
    \begin{equation}\label{eq:mult_factorization}
    \begin{tikzcd}
        X_1\arrow[r, "A"]\arrow[d, "J_{x_4^{*}}" left]& X_2\\
        \mathcal{L}(X_4,X_1)\arrow[r, "L_AR_B"]&
        \mathcal{L}(X_3,X_2)\arrow[u, "\delta _{x_3}" right]
    \end{tikzcd}\quad
    \begin{tikzcd}
        X_4^{*}\arrow[r, "B^{*}"]\arrow[d, "J_{x_1}" left]& X_3^{*}\\
        \mathcal{L}(X_4,X_1)\arrow[r, "L_AR_B"]&
        \mathcal{L}(X_3,X_2)\arrow[u, "\delta _{x_2^{*}}" right]
    \end{tikzcd}.
    \end{equation}
\end{rem}

Let us start by characterizing when $L_AR_B$ is positive.

\begin{prop}\label{prop:multoppos}
    Let $X_1,X_2,X_3,X_4$ be Dedekind complete Banach lattices and let $A \in
    \mathcal{L}(X_1,X_2)$ and $B \in \mathcal{L}(X_3,X_4)$ be nonzero
    operators. Then $L_AR_B$ is a positive operator if and only if $A$
    and $B$ are both positive or both negative.
\end{prop}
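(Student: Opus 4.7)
The plan is to separate the two directions and attack each in turn. The sufficient direction is immediate: if $A,B\ge 0$, then $ATB\ge 0$ whenever $T\ge 0$; if $A,B\le 0$, then $ATB=(-A)T(-B)\ge 0$ whenever $T\ge 0$. Hence the substance lies in the necessary direction, for which I will test $L_AR_B$ on positive rank-one operators. Given $x\in X_1$ and $x^*\in X_4^*$, the rank-one operator $T=x^*\otimes x\in\mathcal{L}(X_4,X_1)$ satisfies
\[
ATBz \;=\; x^*(Bz)\,Ax \;=\; (B^*x^*)(z)\,Ax \quad\text{for every } z\in X_3,
\]
so $ATB=(B^*x^*)\otimes Ax$ in $\mathcal{L}(X_3,X_2)$. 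When $x\ge 0$ and $x^*\ge 0$, one has $T\ge 0$, and the assumption $L_AR_B\ge 0$ thus yields the master inequality
\[
(B^*x^*)(z)\,Ax \;\ge\; 0 \quad \text{in } X_2,\text{ for every } z\ge 0.
\]

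Now, because $A\neq 0$ and the positive cone generates $X_1$, there exists $x_0\ge 0$ in $X_1$ with $Ax_0\neq 0$; similarly, $B\neq 0$ together with Hahn--Banach yields $x_0^*\ge 0$ in $X_4^*$ with $B^*x_0^*\neq 0$. A further Hahn--Banach argument produces a positive functional $f\in X_2^*$ with $f(Ax_0)\neq 0$. Applying $f$ to the master inequality at $(x,x^*)=(x_0,x_0^*)$ gives
\[
(B^*x_0^*)(z)\, f(Ax_0) \;\ge\; 0 \quad\text{for every } z\ge 0,
\]
so the sign of the real number $f(Ax_0)$ forces $B^*x_0^*$ to be either $\ge 0$ or $\le 0$ on the positive cone of $X_3$.

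Suppose first that $f(Ax_0)>0$, so that $B^*x_0^*\ge 0$ and is nonzero. Fixing $x^*=x_0^*$ and picking $z_1\ge 0$ with $(B^*x_0^*)(z_1)>0$, the master inequality with $x\ge 0$ arbitrary gives $Ax\ge 0$, hence $A\ge 0$. Fixing instead $x=x_0$ and letting $x^*\ge 0$ vary, another application of $f$ gives $(B^*x^*)(z)\ge 0$ for all $z\ge 0$ and all $x^*\ge 0$; since positive functionals separate the positive cone of $X_4$, this forces $Bz\ge 0$ for every $z\ge 0$, i.e.\ $B\ge 0$. The opposite case $f(Ax_0)<0$ is entirely symmetric and produces $A\le 0$ and $B\le 0$. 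The main point deserving care is that the single scalar $f(Ax_0)$ simultaneously controls the signs of $A$ and $B$, thereby ruling out the mixed configurations $A\ge 0$, $B\le 0$ and vice versa; beyond this observation, the argument is a routine combination of the rank-one identity $ATB=(B^*x^*)\otimes Ax$ with Hahn--Banach separation.
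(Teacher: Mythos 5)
Your proof is correct and follows essentially the same route as the paper's: both arguments test $L_AR_B$ on positive rank-one operators $x^*\otimes x$, exploit the identity $A(x^*\otimes x)B=(B^*x^*)\otimes Ax$ (which is exactly the paper's factorization of $A$ and $B^*$ through $L_AR_B$ via the maps $J$ and $\delta$), and recover the signs of $A$ and $B$ using positive functionals supplied by Hahn--Banach. The only difference is organizational: the paper splits into cases according to whether $A$ or $B$ is negative and chooses the normalizing functionals accordingly, whereas you let the sign of the single scalar $f(Ax_0)$ decide both signs simultaneously.
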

\begin{proof}
    If $A$ and $B$ are both positive or negative, then clearly
    $L_AR_B$ is positive.

    To see the converse, suppose first that both $A$ and $B$ are not negative. Then there exist $x_3 \in (X_3)_+$ and $x_1
    \in (X_1)_+$ such that $(Bx_3)_+\neq 0$ and $(Ax_1)_+\neq 0$. By
    the Hahn--Banach theorem for positive functionals there exists
    $x_2^{*} \in (X_2^{*})_+$ such that
    \[
    x_2^{*}((Ax_1)_+)=1\text{ and }x_2^{*}((Ax_1)_-)=0,\text{ that is,
    } x_2^{*}(Ax_1)=1;
    \]
    and there exists $x_4^{*} \in (X_4^{*})_+$ such that
    \[
    x_4^{*}((Bx_3)_+)=1\text{ and }x_4^{*}((Bx_3)_-)=0,\text{ that is,
    } x_4^{*}(Bx_3)=1.
    \]
    It immediately follows that the operators $J_{x_4^{*}}, J_{x_1}, \delta
    _{x_3}, \delta _{x_2^{*}}$ are all positive. Since the composition of positive operators is
    positive, by \eqref{eq:mult_factorization} we have $A\ge 0$ and
    $B^{*}\ge 0$, which implies $B\ge 0$.

    Now suppose that $A$ is negative. Take $x_1\in (X_1)_+$ such that
    $Ax_1\neq 0$, and let $-x_2^{*}\in (X_2)^{*}_+$ be such that
    $x_2^{*}(Ax_1)=1$. Now it is clear that $\delta _{x_2^{*}}\le 0$,
    while $J_{x_1}\ge 0$, so $B^{*}=\delta _{x_2^{*}}L_AR_B J_{x_1}\le
    0$, and this again implies $B\le 0$. Similarly, if $B$ is negative, take
    $x_3 \in (X_3)_+$ such that $Bx_3\neq 0$, then there exists
    $-x_4^{*} \in (X_4^{*})_+$ with $x_4^{*}(Bx_3)=1$. It follows that
    $J_{x_4^{*}}\le 0$ while $\delta _{x_3}\ge 0$ so $A=\delta
    _{x_3}L_AR_B J_{x_4^{*}}\le 0$.
\end{proof}

The previous result is enunciated but not proved in \cite[Satz
2.2]{syn}, although with the further assumption that $X_4$ is
\emph{total}.

If $A,B \in \Lr X$ are positive projections, it is not
difficult to check that $L_AR_B$ is a positive projection. Naturally,
one can ask whether the converse is true. Note that if $\lambda \in \R\setminus\{0\}$, $L_{\lambda
A}R_{B/\lambda }=L_AR_B$, but $\lambda A$ and $B/\lambda $ are no
longer positive projections (in general). It turns out, however, that
this is the only obstruction.

\begin{prop}
    Let $X$ be a Dedekind complete Banach lattice, and let $A,B \in
    \mathcal{L}(X)$ be nonzero operators. The multiplication operator
    $L_AR_B\colon\Lr X\to \Lr X$ is a positive projection
    if and only if there exists
    $\lambda \in \R\setminus\{0\}$ such that $ \lambda A$ and $B/\lambda $ are
    positive projections.
\end{prop}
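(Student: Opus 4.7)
The plan is to prove the easy direction by direct computation, and tackle the converse by testing the idempotency identity $A^2TB^2=ATB$ on rank-one regular operators $T=f\otimes x$ to extract scalar identities $A^2=\mu A$ and $B^2=\mu^{-1}B$.

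\textbf{Easy direction.} Suppose $\lambda\in\R\setminus\{0\}$ is such that $P:=\lambda A$ and $Q:=B/\lambda$ are positive projections. Since $L_AR_B=L_PR_Q$, and $L_PR_Q(T)=PTQ$ is positive whenever $T\geq 0$, the operator $L_AR_B$ is positive. Moreover $L_PR_Q\circ L_PR_Q(T)=P^2TQ^2=PTQ=L_PR_Q(T)$, so it is idempotent.

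\textbf{Hard direction.} Assume $L_AR_B$ is a positive projection. By \cref{prop:multoppos}, $A$ and $B$ are both positive or both negative; replacing $(A,B)$ with $(-A,-B)$ does not change $L_AR_B$, so we may and do assume $A,B\geq 0$. Since $B\neq 0$ is positive, there exists $y_0\in X_+$ with $By_0>0$; by the Hahn--Banach theorem for positive functionals we find $f_0\in X^*_+$ with $f_0(By_0)=1$. Likewise, fix $x_0\in X$ with $Ax_0\neq 0$. The idempotency $L_AR_B\circ L_AR_B=L_AR_B$ evaluated on the rank-one regular operator $f\otimes x$ yields, for every $f\in X^*$ and $x,y\in X$,
\[
f(B^2y)\,A^2x = f(By)\,Ax.
\]
Applying this with $f=f_0$, $y=y_0$ and $x=x_0$ shows that $c:=f_0(B^2y_0)\neq 0$ and $A^2x_0=c^{-1}Ax_0$; then letting $x$ vary gives $A^2=c^{-1}A$. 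Substituting this back and choosing an $x$ with $Ax\neq 0$ yields $c\,f(B^2y)=f(By)$ for all $f\in X^*$ and $y\in X$, whence $B=cB^2$, i.e. $B^2=c^{-1}B$.

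\textbf{Positivity of the scalar and conclusion.} Because $y_0, f_0\geq 0$ and $B\geq 0$, we have $c=f_0(B^2y_0)\geq 0$, and $c\neq 0$, so $c>0$. Setting $\lambda:=c$, the operators $\lambda A$ and $B/\lambda$ are positive (as $\lambda>0$ and $A,B\geq 0$) and satisfy
\[
(\lambda A)^2=\lambda^2 A^2=\lambda^2\cdot\lambda^{-1} A=\lambda A,\qquad (B/\lambda)^2=\lambda^{-2}B^2=\lambda^{-2}\cdot\lambda\, B=B/\lambda,
\]
so both are positive projections. The main technical step is the rank-one argument that converts the operator identity $A^2TB^2=ATB$ into the coupled scalar identities $A^2=c^{-1}A$ and $B^2=c^{-1}B$; the remaining algebra is then straightforward.
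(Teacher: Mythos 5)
Your proof is correct, but the key step is genuinely different from (and somewhat slicker than) the paper's. The paper first argues that $Bx$ and $B^2x$ must be linearly dependent for every $x\in X_+$ (by constructing operators $T$ with prescribed values at $Bx$ and $B^2x$ and deriving a contradiction otherwise), obtains a scalar function $\lambda(x)$ with $B^2x=\lambda(x)Bx$, and then runs a two-case analysis to show $\lambda(\cdot)$ is constant, before finally extracting $A^2=A/\lambda$. You instead evaluate the idempotency identity $A^2TB^2=ATB$ directly on rank-one regular operators $T=f\otimes x$, which produces the tensorized identity $f(B^2y)A^2x=f(By)Ax$ and yields both relations $A^2=c^{-1}A$ and $B^2=cB$ in one stroke, with the reciprocal coupling of the constants appearing automatically; this bypasses the paper's linear-(in)dependence dichotomy and the constancy argument entirely. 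One small algebra slip to fix: substituting $A^2x=c^{-1}Ax$ into the identity gives $c^{-1}f(B^2y)=f(By)$, i.e.\ $B^2=cB$, not $B^2=c^{-1}B$ as you wrote in the intermediate step. Your final display quietly uses the correct relation $B^2=\lambda B$ with $\lambda=c$ (otherwise $(B/\lambda)^2=B/\lambda$ would fail), so the conclusion stands, but the intermediate sentence should be corrected for consistency. The reduction to the case $A,B\geq 0$ via $(A,B)\mapsto(-A,-B)$ is fine and matches the paper's treatment of the negative case.
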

\begin{proof}
    Suppose first that $\lambda A$ and $B/\lambda $ are positive
    projections for a certain $\lambda \in \R\setminus\{0\}$. Then $L_AR_B(T)=(\lambda A)T(B/\lambda )\ge 0$ for
    any $T\in \Lpos X$ and
    \begin{align*}
        L_AR_B(L_AR_B(T))&= AL_AR_B(T) B=A^2TB^2\\&=(\lambda A)^2T(
        B/\lambda )^2=(\lambda A)T(B/\lambda )=L_AR_B(T).
    \end{align*}
    Hence $L_AR_B$ is a positive projection.

    Conversely, if $L_AR_B$ is a positive projection, by the previous
    proposition $A$ and $B$ are either both positive or negative. Assume
    first they are both positive. Suppose there exists some $x \in
    X_+$ such that $Bx$ and $B^2x$ are linearly independent. Then for
    any $y\in X_+$ there is a $T\in \Lr X$ such that $T(Bx)=y$ and
    $T(B^2x)=y$ (just take $x^{*} \in X^{*}$ such that
    $x^{*}(Bx)=x^{*}(B^2x)=1$, which exists by Hahn--Banach, and set
    $T=x^{*}\otimes y$). The fact that $A^2TB^2x=ATBx$ implies $A^2y=Ay$, so
    $A=A^2$. Since $A$ is not constant (it is a nonzero operator),
    there are some $y,z\ge 0 $ such that $Ay\neq Az$, but then
    choosing $T\in \Lr X$ such that $T(Bx)=y$ and $ T(B^2x)=z$ contradicts
    the fact that $ATBx=ATB^2x$.

    So for each $x \in X_+$ with $Bx\neq 0$ there exists some real
    $\lambda (x)\ge 0$ satisfying $B^2x=\lambda (x)Bx$; in fact, since
    $Bx=0$ readily implies $B^2x=0$, we may assume that this relation
    holds for any $x \in X$, setting $\lambda (x)$ to be any positive
    number whenever $Bx=0$. For any $x,y \in X_+$
    we have the following relation:
    \[
    \lambda (x+y)(Bx+By)=\lambda (x+y)B(x+y)=B^2(x+y)=B^2x+B^2y=\lambda (x)Bx+\lambda
    (y)By.
    \]
    Now we can distinguish two different possibilities:
    \begin{enumerate}
        \item If $Bx$ and $By$ are linearly independent, the previous
            relation readily implies $\lambda (x)=\lambda
            (x+y)=\lambda (y)$.

        \item If $Bx$ and $By$ are linearly dependent, there exist $\mu
            _1,\mu _2 \in \R_+$, not both zero, such that $\mu _1 Bx+\mu _2 By=0$.
            Applying $B$ to this relation we get
            \[
            \mu _1 \lambda (x) Bx+\mu _2\lambda (y)By=0.
            \]
            If $\mu _1\neq 0$, $Bx=-\mu _2/\mu _1 By$, and dividing
            by $\mu _1$ the previous equation
            \[
            \lambda (x) Bx - \lambda (y)Bx=0,
            \]
            so $\lambda (x)=\lambda (y)$ whenever $Bx\neq 0$. If $\mu
            _2\neq 0$, similarly $\lambda (x)=\lambda (y)$ whenever
            $By\neq 0$.
    \end{enumerate}
    So $\lambda (x)=\lambda $ is constant for any $x \in X_+$ with
    $Bx\neq 0$. We conclude that $B^2=\lambda B$, i.e.,
    $B/\lambda $ is a positive projection. Then
    $\lambda A^2TB=ATB$, and given $x \in X_+$ with $Bx\neq 0$, for any
    $y\ge 0$, choosing $T \in \Lr X$ such that $T(Bx)=y$ we get from
    $\lambda A^2TBx=ATBx$ that $\lambda A^2y=Ay$. Therefore
    $A^2=A/\lambda $ and thus $\lambda A$ is a positive projection.

    When $A$ and $B$ are both negative, applying the previous argument to
    $L_{-A}R_{-B}=L_AR_B$ we conclude that there exists some $\mu
    \ge 0$ such that $(-B)^2=\mu  (-B)$ and $(-A)^2=1/\mu
    (-A)$, so the constant in this case is $\lambda =-\mu $.
\end{proof}

If in the previous proposition we replace positive projections for band
projections, everything works.

\begin{thm}\label{thm:eleop_proj}
    Let $X$ be a Dedekind complete Banach lattice, and let $A,B \in
    \mathcal{L}(X)$ be nonzero operators. The multiplication
    operator $L_AR_B\colon\Lr X\to \Lr X$ is a band projection
    if and only if there exists
    $\lambda \in \R\setminus\{0\}$ such that $\lambda A$ and $B/\lambda $ are
    band projections.
\end{thm}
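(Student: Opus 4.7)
The plan is to bootstrap from the previous proposition together with the standard characterization (recalled in Section~\ref{sec:innerband}) that a positive projection $P$ on a Banach lattice is a band projection if and only if $P\le I$. The easy direction is the "if" part: if $P:=\lambda A$ and $Q:=B/\lambda$ are band projections on $X$, then $L_AR_B=L_PR_Q$ coincides with the map $T\mapsto PTQ$, and this was already recorded in the introduction to be a band projection on $\Lr X$.

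For the converse, I would first apply the previous proposition: since any band projection is in particular a positive projection, there exists $\lambda\in\R\setminus\{0\}$ such that $P:=\lambda A$ and $Q:=B/\lambda$ are positive projections on $X$. It then suffices to show $P\le I_X$ and $Q\le I_X$. The additional information now available is that $L_PR_Q\le I_{\Lr X}$, which reads $PTQ\le T$ for every $T\in\Lpos X$. The idea is to test this pointwise inequality on cleverly chosen rank-one operators of the form $T=x^*\otimes x$.

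To obtain $P\le I_X$, the strategy is to exploit a positive fixed vector of $Q$: since $B\neq 0$ we have $Q\neq 0$, so picking $y_0\geq 0$ with $Qy_0\neq 0$ and setting $y_1=Qy_0$ gives $Qy_1=y_1$ with $y_1>0$. By Hahn--Banach choose $x^*\in X^*_+$ with $x^*(y_1)=1$, and for arbitrary $x\in X_+$ take $T=x^*\otimes x\geq 0$. Evaluating $PTQ\le T$ at $y_1$ collapses the computation and yields $Px\le x$ directly. For $Q\le I_X$ the argument is dual: fix $x_1\in X_+$ with $Px_1=x_1\neq 0$, and for arbitrary $y\in X_+$ and $x^*\in X^*_+$ take $T=x^*\otimes x_1$; then $PTQ(y)=(Q^*x^*)(y)\,x_1$ and $T(y)=x^*(y)\,x_1$, and since $x_1>0$ the comparison forces $x^*(Qy)\le x^*(y)$; as this holds for every $x^*\in X^*_+$, a Hahn--Banach separation argument in the ordered dual gives $Qy\le y$.

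The one place requiring a touch of care is the implication "$\alpha x_1\le \beta x_1$ with $x_1>0$ forces $\alpha\le \beta$", which is immediate but is precisely where positivity (not merely non-nullity) of the test vector matters. Beyond this minor observation, the whole argument reduces mechanically to the previous proposition together with evaluation on rank-one test operators, so I do not anticipate any substantial obstacle.
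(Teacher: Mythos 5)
Your proposal is correct. The overall skeleton coincides with the paper's: the ``if'' direction is handled identically (a band projection composed on both sides of $T$ stays below $T$), and for the converse both arguments first invoke the preceding proposition to produce $\lambda$ with $P=\lambda A$ and $Q=B/\lambda$ positive projections, and then extract $P\le I_X$ and $Q\le I_X$ from the inequality $PTQ\le T$ valid for all $T\in\Lpos X$. Where you diverge is in how that last extraction is performed. The paper multiplies $PTQ\le T$ on the right by $Q$ to get $(I_X-P)TQ\ge 0$ for all $T\ge 0$, i.e.\ $L_{I_X-P}R_Q\ge 0$, and then quotes \cref{prop:multoppos} to conclude $I_X-P\ge 0$ (and symmetrically for $Q$); you instead test $PTQ\le T$ directly on rank-one operators $x^*\otimes x$ evaluated at a positive fixed vector of $Q$ (resp.\ built from a positive fixed vector of $P$), which yields $Px\le x$ and, after a duality step, $Qy\le y$. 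Both routes are sound, and they are close relatives, since the proof of \cref{prop:multoppos} itself runs through rank-one factorizations. Your version is more self-contained and incidentally sidesteps the degenerate case $I_X-P=0$ that the paper's appeal to \cref{prop:multoppos} (stated for nonzero operators) technically leaves implicit; the paper's version is more modular, reusing the positivity characterization as a black box. All the auxiliary facts you rely on (existence of $y_0\in X_+$ with $Qy_0\neq 0$, existence of a positive norming functional at a positive vector, and the fact that $z\ge 0$ whenever $x^*(z)\ge 0$ for all $x^*\in X^*_+$) are standard and correctly deployed.
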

\begin{proof}
    If $\lambda A$ and $B/\lambda $ are band projections, then
    $L_AR_B=L_{\lambda A}R_{B/\lambda }$ is a positive projection
    as we have already seen. If $T\in \Lpos X$:
    \[
    L_AR_B(T)=(\lambda A)T(B/\lambda )\le I_X T I_X=T,
    \]
    which proves $L_AR_B\le I_{\Lr X}$. It follows that $L_AR_B$ is a band projection.

    Conversely, if $L_AR_B$ is a band projection, by the previous
    proposition there exists $\lambda \in \R\setminus\{0\}$ such that $P=\lambda A$ and
    $Q=B/\lambda  $ are positive projections. Also, for any $T \in
    \Lpos X$,
    \begin{equation}\label{eq:multopband}
    PTQ=L_AR_B(T)\le I_{\Lr X}(T)=T
    \end{equation}
    so, multiplying by $Q$ on the right, $PTQ\le TQ$. This means that
    the multiplication operator $L_{I_X-P}R_Q(T)=(I_X-P)TQ=TQ-PTQ$ is positive, and by
    \cref{prop:multoppos} this implies that $I_X-P\ge 0$ (because
    $Q\ge 0$). So $0\le P\le I_X$ and $P^2=P$, that is, $P$ is a band
    projection. In the same way, multiplying \eqref{eq:multopband} on
    the left by $P$ we get $PTQ\le PT$, which implies that
    $L_{P}R_{I_X-Q}(T)=PT-PTQ\ge 0$. Applying
    \cref{prop:multoppos} we conclude that also $Q\le I_X$, that is,
    $Q$ is also a band projection.
\end{proof}

\section*{Acknowledgements}

Research of D. Muñoz-Lahoz supported by JAE Intro ICU scholarship associated to CEX2019-000904-S funded by MCIN/AEI/10.13039/501100011033. Research of P. Tradacete partially supported by grants PID2020-116398GB-I00 and CEX2019-000904-S funded by MCIN/AEI/10.13039/501100011033, as well as by a 2022 Leonardo Grant for Researchers and Cultural Creators, BBVA Foundation.

\end{document}